\renewcommand*\l@section{\@dottedtocline{1}{1.5em}{2.3em}}
\theoremstyle{plain}
\newtheorem{theorem}{Theorem}[section]
\newtheorem{proposition}[theorem]{Proposition}
\newtheorem{lemma}[theorem]{Lemma}
\newtheorem{corollary}[theorem]{Corollary}
\theoremstyle{definition}
\newtheorem{definition}{Definition}[section]
\theoremstyle{example}
\newtheorem{example}{Example}[section]
\theoremstyle{remark}
 \newtheorem{remark}{Remark}[section]
\numberwithin{equation}{section}
\numberwithin{theorem}{section}
\begin{document}

\begin{center}
{\Large {\textbf {A K\"{u}nneth Formula for Product of Hypergraphs
}}}
 \vspace{0.58cm}

Chong Wang*\dag, Shiquan Ren*, Jian Liu*

\bigskip

\bigskip

    \parbox{24cc}{{\small
{\textbf{Abstract}.}
In this paper, based on the embedded homology groups of hypergraphs defined in \cite{h1},
we define the product of hypergraphs and prove the corresponding K\"{u}nneth formula of hypergraphs which  can be generalized to the K\"{u}nneth formula for the embedded  homology of graded subsets of chain complexes  with coefficients in a principal ideal domain.
}}
\end{center}

\vspace{1cc}

\footnotetext[1]
{ {\bf 2020 Mathematics Subject Classification.}  	Primary  55U15, 55N35;  Secondary  55U35.
}

\footnotetext[2]{{\bf Keywords and Phrases.}   hypergraphs,  embedded homology,  associated simplicial complexes,  simplicial set,  K\"{u}nneth formula. }

\footnotetext[3] { * first authors;   \dag   corresponding author. }

\section{Introduction}
In topology, a hypergraph can be obtained from a simplicial complex by deleting some non-maximal simplices  (cf. \cite{h1,parks}). Hypergraph is a standard mathematical network-model for many   real-world data problems.  For example,  the coauthorship network of scientific
researchers and their collaborations (cf. \cite{coa-1}),  the biological cellular networks (cf. \cite{network2}), and   the network of biomolecules
and biomolecular interactions (cf. \cite{add-1}). Hypergraph is the key hub to connect the simplicial complex in topology and graph in combinatorics,
which is worth studying in theory and application (cf. \cite{berge,h1,parks}).

Let $V_\mathcal{H}$ be a totally-ordered finite set. Let $2^V$ denote the powerset of $V$.  Let $\emptyset$ denote the empty set.  A {\it hypergraph}  is a pair $(V_\mathcal{H},\mathcal{H})$ where $\mathcal{H}$ is a subset of  $2^V\setminus\{\emptyset\}$ (cf. \cite{berge,parks}).  An element of $V_\mathcal{H}$ is called a {\it vertex}  and an element of $\mathcal{H}$ is called a {\it hyperedge}.  A  hyperedge $\sigma\in\mathcal{H}$ consisting of $k+1$ vertices  is called a  {\it $k$-dimensional hyperedge} ($k\geq 0$), denoted as $\sigma^{(p)}$ or $\sigma$ for short. Throughout this paper, we assume that each vertex in $V_\mathcal{H}$ appears in at least one hyperedge in $\mathcal{H}$.  Hence $V_\mathcal{H}$ is the union $\bigcup_{\sigma\in\mathcal{H}} \sigma$, and we simply denote a hypergraph $(V_\mathcal{H},\mathcal{H})$ as $\mathcal{H}$.

Let $\mathcal{H}$ be a hypergraph. The {\it associated simplicial complex} $\mathcal{K}_{ \mathcal{H}}$ of $\mathcal{H}$ is defined as the smallest simplicial complex that $\mathcal{H}$ can be embedded in (cf. \cite{parks}). Precisely, the set of all simplices of $\mathcal{K}_{\mathcal{H}}$ consists of all
the non-empty subsets $\tau \subseteq \sigma$, for all $\sigma \in \mathcal{H}$.

There are various (co)homology theories of hypergraphs. For example, A.D. Parks and S.L. Lipscomb \cite{parks} studied the homology of
the associated simplicial complex in 1991. F.R.K. Chung and R.L. Graham \cite{Chung} constructed certain cohomology for hypergraphs in
a combinatorial way in 1992. E. Emtander \cite{Emtander} constructed the independence simplicial complexes for hypergraphs and studied
the homology of these simplicial complexes, and J. Johnson \cite{Johnson}  applied the topology of hypergraphs to study hyper-networks
of complex systems in 2009. S. Bressan, J. Li, S. Ren and J. Wu \cite{h1}  defined the embedded homology of hypergraphs
as well as the persistent embedded homology of sequences of hypergraphs in 2019.

Let $R$ be  a principal ideal domain. Let $\mathcal{H}$ be a hypergraph. Let $R(\mathcal{H})_n$ be the finitely generated free $R$-module with generators of
 $n$-dimensional hyperedges in $\mathcal{H}$. Let $\mathcal{K}$ be a  simplicial complex such that $\mathcal{H}\subseteq \mathcal{K}$. By \cite{h1}, the $\mathit{infimum ~chain ~complex}$ and
the $\mathit{supremum ~chain ~complex}$ of $\mathcal{H}$ are  defined as
\begin{eqnarray*}
 \text{Inf}_n(R(\mathcal{H})_{\ast})= R(\mathcal{H}_n) \cap (\partial_n)^{-1}R(\mathcal{H})_{n-1}, n \geq 0
\end{eqnarray*}
and
\begin{eqnarray*}
\text{Sup}_n(R(\mathcal{H})_{\ast}) = R(\mathcal{H}_n) \cap \partial_{n+1}R(\mathcal{H})_{n+1}, n \geq 0
\end{eqnarray*}
respectively, where $\partial_{\ast}$ is the boundary maps of $\mathcal{K}$ and $\partial_{\ast}^{-1}$
denotes the pre-image of $\partial_{\ast}$. In \cite{h1},  it is proved that both $\text{Inf}_n(R(\mathcal{H})_{\ast})$ and $\text{Sup}_n(R(\mathcal{H})_{\ast})$ do not depend on the choice of the simplicial complex $\mathcal{K}$ that $\mathcal{H}$ embedded in. Therefore,
$\mathcal{K}$ can be taken as the associated simplicial complex of $\mathcal{H}$.  By \cite[Proposition~2.4]{h1},
the homologies of these two chain complexes are isomorphic, which are defined as the {\it embedded homology} of $\mathcal{H}$ and
denoted as $H_n(\mathcal{H})$.  In particular, if the hypergraph is a simplicial complex, then the embedded homology coincides with the usual homology.
Moreover, each morphism of hypergraphs from $\mathcal{H}$ to $\mathcal{H}'$ induces an homomorphism between the embedded homology $H_n(\mathcal{H};R)$ and
$H_n(\mathcal{H'};R)$ (\cite[Proposition~3.7]{h1}).

K\"{u}nneth formulas describe the homology or cohomology of a product space in terms of the homology or cohomology of the factors.
In \cite{hatcher}, Hatcher gave the classical algebraic K\"{u}nneth formula. In \cite{yau1,3}, A. Grigor'yan, Y. Lin, Y. Muranov and S.T. Yau  studied the the K\"{u}nneth
formula for the path homology (with field coefficients) of digraphs.

In this paper, we define the product ``$\boxtimes$'' of hypergraphs and prove that for
 hypergraphs $\mathcal{H}$ and $\mathcal{H}'$,
\begin{eqnarray*}
\mathcal{K}_{\mathcal{H}\boxtimes \mathcal{H}'}=\mathcal{K}_{\mathcal{H}}\times \mathcal{K}_{\mathcal{H}'}
\end{eqnarray*}
in Proposition~\ref{prop1}. Moreover, based on the embedded homology defined in \cite{h1}, we study the K\"{u}nneth formula for  the product of hypergraphs and prove that there is a natural exact sequence
\begin{equation*}
  0\rightarrow \bigoplus\limits_{p+q=n}H_{p}(\mathcal{H})\otimes H_{q}(\mathcal{H}')\rightarrow H_{n}(\mathcal{H}\boxtimes\mathcal{H}')\rightarrow
  \bigoplus\limits_{p+q=n}\mathrm{Tor}_{R}(H_{p}(\mathcal{H}),H_{q-1}(\mathcal{H}'))\rightarrow 0.
\end{equation*}
in Theorem~\ref{th-1.0}, which provides a potential tool for the data analysis of the hypergraph-modeled  networks and can be written as follows.

Finally, in Section~\ref{sec-6},  we give further discussions about  Theorem~\ref{th-1.0} and extend the K\"{u}nneth formula for digraphs proved in \cite[Section~7]{yau1} and \cite{3}.

\section{Preliminaries}\label{sec-2}
In this section, we review two important chain complex maps that model topological products by
tensor products, which are called the Eilenberg-Zilber map and the Alexander-Whitney map.  The content of this section is referred to  \cite{ann1,ann2,jpaa,rational} and \cite[Chapter~3, Section~3.B]{hatcher}.


\smallskip


First,  we review  the cross product of simplicial complexes which is essentially the Eilenberg-Zilber map  on the simplicial set level. For the details of simplicial  complexes which can be  represented by simplicial sets, the readers can refer to \cite[Chhapter~2]{wu1}.

Let $\mathcal{K}$ and $\mathcal{K}'$ be two (abstract) simplicial complexes with boundary maps $\partial$ and $\partial'$ respectively.  Take two simplices $\sigma^{(p)}\in \mathcal{K}$ and $\tau^{(q)}\in \mathcal{K}'$, where $p,q\geq 0$.  We label the vertices of $\sigma $ and $\tau$ respectively as
\begin{eqnarray*}
\sigma=\{v_0,v_1,\ldots, v_p\},  ~~~ \tau=\{w_0,w_1,\ldots,w_q\}.
\end{eqnarray*}
We regard the pairs $(i,j)$ with $0\leq i \leq p$ and $0\leq j\leq q$ as the vertices of an $p\times q$ rectangular grid in the plane.

Let $\gamma$ be a path formed by a sequence of $p$ horizontal edges and $q$ vertical edges in the grid, starting at $(0,0)$ and ending at $(p,q)$, always moving either   to the right or upward.  To such a path $\gamma$,  we associate a simplex
\begin{eqnarray*}
\eta(\gamma)=\{(v_{i_k},w_{j_k})\mid (i_k,j_k) \text{ is the } k\text{-th vertex of the path }\gamma,  0\leq k\leq m+n\}.
\end{eqnarray*}
Let $\sigma$ run over all simplices of $\mathcal{K}$, $\tau$ run over all simplices of $\mathcal{K}'$, and $\gamma$ run over all possible paths.  We obtain a simplicial complex $\mathcal{K}\times \mathcal{K}'$.  The Eilenberg-Zilber  map
\begin{eqnarray}\label{eq-3.1}
\mu: C_p(\mathcal{K};R)\otimes_R  C_q(\mathcal{K}';R)\longrightarrow C_{p+q} (\mathcal{K}\times \mathcal{K}';R)
\end{eqnarray}
is given by the rule
\begin{eqnarray}\label{eq-3.2.1}
\mu(\sigma^{(p)}\otimes \tau^{(q)})= \sum_{\gamma } (-1)^{|\gamma|}\eta(\gamma).
\end{eqnarray}
Here $|\gamma|$ is the number of squares in the grid lying below the path $\gamma$.   We use $\partial^\times$ to denote the boundary maps of $\mathcal{K}\times \mathcal{K}'$.  The boundary maps satisfy
\begin{eqnarray*}\label{eq-3.30}
\partial^\times _{p+q}(\mu(\sigma^{(p)}\otimes \tau^{(q)}))= \mu ((\partial_p\sigma^{(p)})\otimes \tau^{(q)})+ (-1)^p\mu(\sigma^{(p)}\otimes (\partial_q\tau^{(q)})).
\end{eqnarray*}
Hence $\mu$ gives a chain map by (\ref{eq-3.1}).

\smallskip

Second, we review the Alexander-Whitney map for the product of two simplices $\Delta^p \times \Delta^q$.
\begin{eqnarray*}
\nu: C_*(\Delta^p\times \Delta^q)\longrightarrow C_*(\Delta^p)\otimes C_*(\Delta^q)
\end{eqnarray*}
is  defined based on that $C_*(\Delta^p\times \Delta^q)$ is a free chain complex and $C_*(\Delta^p)\otimes C_*(\Delta^q)$ is a acyclic chain complex.
\begin{eqnarray*}
\xymatrix{
\cdots\ar[r] &C_2(\Delta^p\times \Delta^q)\ar[d]^{\nu_2}\ar[r]^{\partial_2^{\times}} & C_1(\Delta^p\times \Delta^q)\ar[d]^{\nu_1}\ar[r]^{\partial_1^{\times}}&C_0(\Delta^p\times \Delta^q)\ar[d]^{\nu_0}\\
\cdots\ar[r] &(C_*(\Delta^p)\otimes C_*(\Delta^q))_2 \ar[r]^{(\partial\otimes\partial')_2} &(C_*(\Delta^p)\otimes C_*(\Delta^q))_1 \ar[r]^{(\partial\otimes\partial')_1}&(C_*(\Delta^p)\otimes C_*(\Delta^q))_0
}
\end{eqnarray*}
Specifically, $\nu_0$ is the identity. $\nu_1$ is chosen to be any map such that $\nu_0\cdot\partial_1=\partial'_1\cdot\nu_1$ to make the diagram commute. That is,  $\nu_0\cdot\partial_1^{\times}=(\partial\otimes\partial')_1\cdot\nu_1$. After $\nu_1$ is given, $\nu_2$ is chosen to be any map that makes the diagram commute, and so on.
\smallskip

A deep-going result of $\mu$ and $\nu$ is as follows.
\begin{proposition}(cf.\cite{rational,ann1,ann2,jpaa})
\label{pr-3.111}
$\nu$ and $\mu$ are inverse chain equivalences. In fact, $\nu\circ\mu=\mathrm{id}$ and $\mu\circ\nu$ is naturally homotopic to the identity.
\qed
\end{proposition}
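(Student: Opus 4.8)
The plan is to prove the statement by the method of acyclic models, treating $C_*(-\times-)$ and $C_*(-)\otimes C_*(-)$ as functors on the category of pairs of simplicial complexes (equivalently, of the simplicial sets representing them) and taking the products of simplices $\{\Delta^p\times\Delta^q\}_{p,q\geq 0}$ as models. First I would record that both functors are \emph{free} on these models: in degree $n$, $C_n(\mathcal{K}\times\mathcal{K}')$ is free on the $n$-simplices of the product, each of which is the image of a characteristic map out of some $\Delta^p\times\Delta^q$, and likewise $(C_*(\mathcal{K})\otimes C_*(\mathcal{K}'))_n$ is free on pairs $(\sigma^{(p)},\tau^{(q)})$ with $p+q=n$. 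Second, I would observe that both functors are \emph{acyclic} on the models: since each $\Delta^p$ and $\Delta^q$ is contractible, $C_*(\Delta^p\times\Delta^q)$ and $C_*(\Delta^p)\otimes C_*(\Delta^q)$ each have vanishing reduced homology. Naturality of $\mu$ is immediate from its explicit formula (\ref{eq-3.2.1}), and the inductive lifting used above to define $\nu$ can be carried out functorially, so $\nu$ is natural as well.

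With this setup, the comparison (uniqueness) clause of the acyclic models theorem applies on both sides. The maps $\nu\circ\mu$ and $\mathrm{id}$ are natural chain self-maps of the free, model-acyclic functor $C_*(-)\otimes C_*(-)$ that agree with the augmentation-preserving identity in degree $0$; hence they are naturally chain homotopic, giving $\nu\circ\mu\simeq\mathrm{id}$. Symmetrically, $\mu\circ\nu$ and $\mathrm{id}$ are natural chain self-maps of $C_*(-\times-)$ agreeing in degree $0$, so there is a natural chain homotopy $D$ with $\partial^\times D+D\partial^\times=\mu\circ\nu-\mathrm{id}$. Together these show that $\mu$ and $\nu$ are mutually inverse chain homotopy equivalences, which is the principal assertion.

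To upgrade $\nu\circ\mu\simeq\mathrm{id}$ to the on-the-nose equality $\nu\circ\mu=\mathrm{id}$, I would compute directly rather than rely on the homotopy; this pins down $\nu$ as the classical Alexander--Whitney map. Writing the Eilenberg--Zilber map as the shuffle sum (\ref{eq-3.2.1}) and the Alexander--Whitney map on a simplex as the sum of front-face $\otimes$ back-face terms, the composite $\nu\mu(\sigma^{(p)}\otimes\tau^{(q)})$ expands into a sum over shuffles and splitting points; the terms in which a front or back face collapses vanish, and the remaining terms cancel in pairs except for the single surviving summand $\sigma^{(p)}\otimes\tau^{(q)}$. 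Equivalently, I could arrange the inductive construction of $\nu$ so that at each degree $n$ the chosen lift restricts to the two-sided inverse of $\mu_n$ on the direct summand $\mathrm{im}(\mu_n)\subseteq C_n(\mathcal{K}\times\mathcal{K}')$ (each $\mu_n$ being injective with image a summand), extending over a complement by acyclicity; the prescribed value is compatible with the boundary precisely because $\mu$ is itself a chain map, so $\partial^\times$ carries $\mathrm{im}(\mu_n)$ into $\mathrm{im}(\mu_{n-1})$.

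The two homotopy statements are essentially formal once freeness and model-acyclicity are established; the step I expect to be the real obstacle is the exact equality $\nu\circ\mu=\mathrm{id}$. Its verification is genuinely combinatorial—either the telescoping cancellation among the shuffle and front/back-face terms, or the careful choice of splitting in the inductive lift—and, unlike the rest of the argument, it does not follow from a soft acyclic-models principle.
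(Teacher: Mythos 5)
Your proposal cannot be checked against an argument in the paper, because the paper gives none: Proposition~\ref{pr-3.111} is stated with its proof omitted, as a ``cf.''\ citation to Eilenberg--MacLane, F\'{e}lix--Halperin--Thomas and Gonz\'{a}lez-D\'{i}az--Real. Your acyclic-models argument is correct and is essentially the classical proof found in those references: both $(\mathcal{K},\mathcal{K}')\mapsto C_*(\mathcal{K}\times\mathcal{K}')$ and $(\mathcal{K},\mathcal{K}')\mapsto C_*(\mathcal{K})\otimes C_*(\mathcal{K}')$ are free on the models $\{(\Delta^p,\Delta^q)\}_{p,q\geq 0}$ and acyclic on them, the comparison theorem gives the two natural homotopies, and the strict identity is a finite combinatorial check. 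Two refinements are worth recording. First, your instinct that $\nu\circ\mu=\mathrm{id}$ is the one non-formal step is correct, and it matters more than you may realize: the paper's $\nu$ is defined only by arbitrary inductive lifts (``$\nu_2$ is chosen to be any map that makes the diagram commute''), and for a bad lift the on-the-nose equality genuinely fails. For instance on $\Delta^2\times\Delta^1$ one may add to the Alexander--Whitney $\nu_2$ a term $\varphi(\,\cdot\,)\,\partial(\iota_2\otimes\iota_1)$, where $\varphi$ is any linear functional on $C_2(\Delta^2\times\Delta^1)$; the degree-$3$ lift is then forced, since the tensor complex has no nonzero $3$-cycles, and it sends $\mu(\iota_2\otimes\iota_1)$ to $\bigl(1+\varphi(\mu(\partial(\iota_2\otimes\iota_1)))\bigr)\,\iota_2\otimes\iota_1$, which differs from $\iota_2\otimes\iota_1$ for suitable $\varphi$. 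So pinning $\nu$ down as the Alexander--Whitney map (or your alternative prescription of inverting $\mu$ on its image, which is consistent because $\mu$ is an injective chain map onto a direct summand and $\partial^{\times}$ preserves that summand) is exactly what the proposition requires; it is not an optional strengthening. Second, a simplification in your combinatorial check: in this normalized simplicial-complex setting no pairwise cancellation occurs in $\nu\mu(\sigma^{(p)}\otimes\tau^{(q)})$. A summand indexed by a path $\gamma$ and a splitting point $k$ survives only if the first $k$ steps of $\gamma$ are horizontal and the remaining steps vertical---otherwise one of the two projections has a repeated vertex and the term vanishes---which forces $k=p$ and $\gamma$ to be the right-then-up corner path, whose sign is $+1$; the single surviving term is $\sigma^{(p)}\otimes\tau^{(q)}$.
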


Finally, we give an example to illustrate the  Eilenberg-Zilber map  and  the  Alexander-Whitney map for chain complexes.
\begin{example}
Let $\Delta^{1}=\{\{0,1\}, \{0\}, \{1\}\}$ be a $1$-simplex. Then
\begin{eqnarray*}
\Delta^{1}\times \Delta^{1}&=&\{\{(0,0)\},\{(0,1)\}, \{(1,0)\}, \{(1,1)\}; \{(0,0),(0,1)\}, \{(0,0),(1,0)\},\\
&&\{(1,0),(1,1)\},\{(0,1),(1,1)\}, \{(0,0),(1,1)\};\\
&&\{(0,0),(0,1),(1,1)\}, \{(0,0),(1,0),(1,2)\}\}.
\end{eqnarray*}
$C_*(\Delta^{1})\otimes C_*(\Delta^{1})$ is the free module generated by
\begin{eqnarray*}
&&\{0\}\otimes \{0\},~ \{0\}\otimes \{1\}, ~ \{1\otimes \{0\}, ~\{1\}\otimes \{1\};\\
&&\{0\}\otimes \{0,1\}, \{0,1\}\otimes \{0, \{1\otimes \{0,1\}, \{0,1\}\otimes \{1\};\\
&&\{0,1\}\otimes \{0,1\}.
\end{eqnarray*}
By (\ref{eq-3.2.1}), $\mu: C_*(\Delta^{1})\otimes C_*(\Delta^{1})\longrightarrow C_*(\Delta^{1}\times \Delta^{1})$ is given by
\begin{eqnarray*}
&&\{0\}\otimes \{0\}\rightarrow \{(0,0)\},~~~~~ \{0\}\otimes\{1\}\rightarrow \{(0,1)\}, ~~~~~ \{1\}\otimes \{0\}\rightarrow \{(1,0)\}, ~~~~~\{1\}\otimes \{1\} \rightarrow \{(1,1)\};\\
&&\{0\}\otimes \{01\}\rightarrow \{(0,0),(0,1)\},~ \{01\}\otimes \{0 \}\rightarrow \{(0,0),(1,0)\}, ~\{1\}\otimes \{01\}\rightarrow \{(1,0),(1,1)\},\\
&&\{01\}\otimes \{1\}\rightarrow \{(0,1),(1,1)\};\{01\}\otimes \{01\}\rightarrow \{\{(0,0),(1,0),(1,1)\}-\{(0,0),(0,1),(1,1)\}\}.
\end{eqnarray*}
$\nu_0$ is given by
\begin{eqnarray*}
\{(0,0)\}\rightarrow \{0\},~~~~~\{(0,1)\} \rightarrow\{ 0\}\otimes \{1\},~~~~~\{(1,0)\}\rightarrow \{1\}\otimes \{0\},~~~~~\{(1,1)\}\rightarrow \{1\}\otimes \{1\}.
\end{eqnarray*}
To make $\nu_1$ satisfy $\nu_0\circ \partial_1^{\times}=(\partial\otimes\partial')_1\circ \nu_1$, we have that
\begin{eqnarray*}
&\{(0,0),(0,1)\}\rightarrow \{0\}\otimes \{01\},&\{(0,0),(1,0)\}\rightarrow \{01\}\otimes \{0\},\\
&\{(1,0),(1,1)\}\rightarrow \{1\}\otimes \{01\},&\{(0,1),(1,1)\} \rightarrow \{01\}\otimes \{1\}
\end{eqnarray*}
and $\nu_1(\{(0,0),(1,1)\})$ is defined to be $\{01\}\otimes 0+1\otimes \{01\}$ or to be $0\otimes \{01\}+\{01\}\otimes 1$. Then we can define $\nu_2$ based on $\nu_1\circ \partial_2^{\times}=(\partial\otimes\partial')_2\circ\nu_2$. That is, either
\begin{eqnarray*}
\nu_1(\{(0,0),(1,1)\})&=&\{0\}\otimes \{01\}+\{01\}\otimes \{1\},\\
\nu_2(\{(0,0),(1,0),(1,1)\})&=&\{01\}\otimes \{01\},\\
\nu_2(\{(0,0),(0,1),(1,1)\})&=&0
\end{eqnarray*}
or
\begin{eqnarray*}
\nu_1(\{(0,0),(1,1)\})&=&\{01\}\otimes \{0\}+\{1\}\otimes \{01\},\\
\nu_2(\{(0,0),(0,1),(1,1)\})&=&\{01\}\otimes \{01\},\\
\nu_2(\{(0,0),(1,0),(1,1)\})&=&0.
\end{eqnarray*}
Hence
\begin{eqnarray*}
\nu_2(\{(0,0),(1,0),(1,1)\}-\{(0,0),(0,1),(1,1)\})=\pm\{01\}\otimes \{01\}.
\end{eqnarray*}
\end{example}

\section{Auxiliary results for Theorem~\ref{th-1.0}}
Let $\{C_n,\partial_n\}_{n\geq 0}$ and $\{C'_n,\partial'_n\}_{n\geq 0}$ be chain complexes over $R$. For each $n\geq 0$,  we consider sub-$R$-modules $D_n\subseteq C_n$ and $D'_n\subseteq C'_n$.  Then we have graded sub-$R$-modules of the chain complexes
\begin{eqnarray*}
D=\{D_n\}_{n\geq 0}, ~~~  D'=\{D'_n\}_{n\geq 0}.
\end{eqnarray*}
The tensor product of $D$ and $D'$ is defined as
\begin{eqnarray*}
D\otimes D'=\Big\{ \bigoplus _{p+q=n,\atop p,q\geq 0} D_p\otimes D'_q\Big\}.
\end{eqnarray*}
It is direct to verify that $D\otimes D'$ is a graded sub-$R$-modules of the  chain complex $C\otimes C'$.
For simplicity,   we denote
\begin{eqnarray*}
(D\otimes D')_n=  \bigoplus _{p+q=n,\atop p,q\geq 0} D_p\otimes D'_q.
\end{eqnarray*}
For any $n\geq 0$, by direct calculation,  we have that
\begin{eqnarray}\label{eq-2.11}
(\partial\otimes \partial')_n  (D\otimes  D')_n=\sum_{p+q=n,\atop  p,q\geq 0}\partial_p D_p\otimes D'_q + D_p\otimes \partial'_q D'_q.
\end{eqnarray}

\begin{lemma}\label{le-2.2}
For any $n\geq 0$, each element in $(D\otimes D')_n\cap(\partial\otimes \partial')_n^{-1}(D\otimes D')_{n-1}$ can be written in the form
\begin{equation*}
\sum\limits_{i=1}^{m}x_{i}\otimes y_{i},\quad \mathrm{deg}(x_i)=p_i,~\mathrm{deg}(y_i)=q_i,
\end{equation*}
where  $x_i$, $y_i$ are linear combinations of elements in  $D_{p_i}$ and $D'_{q_i}$ respectively such that for each $1\leq i\leq m$,
 \begin{eqnarray*}
 (x_{i}\otimes y_{i})\in  (D\otimes D')_n\cap(\partial\otimes \partial')_n^{-1}(D\otimes D')_{n-1}.
 \end{eqnarray*}
\end{lemma}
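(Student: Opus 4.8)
The plan is to prove the statement in two stages: first reduce an arbitrary element of the intersection to its homogeneous (bidegree) components, and then decompose each homogeneous component into single tensors that already lie in the intersection. Throughout I fix the distinguished bases of $C_*$ and $C'_*$ coming from the embedded-homology setting, so that $D_p$ and $D'_q$ are spanned by subsets $S_p$ and $S'_q$ of these bases; in particular $C_p = D_p\oplus E_p$ and $C'_q = D'_q\oplus E'_q$ as $R$-modules, where $E_p,E'_q$ are spanned by the complementary basis elements. I also abbreviate $\mathrm{Inf}_p(D) = D_p\cap\partial_p^{-1}(D_{p-1})$ and $\mathrm{Inf}_q(D') = D'_q\cap(\partial'_q)^{-1}(D'_{q-1})$.

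First I would take $z$ in $(D\otimes D')_n\cap(\partial\otimes\partial')_n^{-1}(D\otimes D')_{n-1}$ and write its unique homogeneous expansion $z = \sum_{p+q=n}z_{p,q}$ with $z_{p,q}\in D_p\otimes D'_q$. Applying $\partial\otimes\partial'$ and sorting by bidegree, the component of $(\partial\otimes\partial')_n(z)$ in bidegree $(a,b)$ equals $(\partial\otimes 1)(z_{a+1,b}) + (-1)^a(1\otimes\partial')(z_{a,b+1})$, where the first summand lies in $C_a\otimes D'_b$ and the second in $D_a\otimes C'_b$. The hypothesis forces this component into $D_a\otimes D'_b$ for every $(a,b)$. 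Using the direct-sum decomposition $C_a\otimes C'_b = (D_a\otimes D'_b)\oplus(D_a\otimes E'_b)\oplus(E_a\otimes D'_b)\oplus(E_a\otimes E'_b)$, the first summand contributes nothing to the $D_a\otimes E'_b$ block and the second nothing to the $E_a\otimes D'_b$ block; since their sum has vanishing projection to both blocks, each summand must itself lie in $D_a\otimes D'_b$. This yields $(\partial\otimes 1)(z_{p,q})\in D_{p-1}\otimes D'_q$ and $(1\otimes\partial')(z_{p,q})\in D_p\otimes D'_{q-1}$ for all $p,q$, so each $z_{p,q}$ already lies in the intersection. Hence it suffices to treat a single homogeneous $w = z_{p,q}$.

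For such a $w$, I would expand $w = \sum_{e'\in S'_q}x_{e'}\otimes e'$ with $x_{e'}\in D_p$ using the basis of $D'_q$. The condition $(\partial\otimes 1)(w)\in D_{p-1}\otimes D'_q$, read off in the basis $\{e'\}$, gives $\partial x_{e'}\in D_{p-1}$, i.e.\ $x_{e'}\in\mathrm{Inf}_p(D)$, so $w\in\mathrm{Inf}_p(D)\otimes D'_q$; symmetrically $w\in D_p\otimes\mathrm{Inf}_q(D')$. It then remains to promote this to $w\in\mathrm{Inf}_p(D)\otimes\mathrm{Inf}_q(D')$; granting that, I pick $R$-bases of $\mathrm{Inf}_p(D)$ and $\mathrm{Inf}_q(D')$ (free, since $R$ is a PID) and expand $w$ along them, obtaining $w = \sum_i x_i\otimes y_i$ with $x_i\in\mathrm{Inf}_p(D)$ and $y_i\in\mathrm{Inf}_q(D')$; each such $x_i\otimes y_i$ satisfies $\partial x_i\otimes y_i\in D_{p-1}\otimes D'_q$ and $x_i\otimes\partial' y_i\in D_p\otimes D'_{q-1}$, hence lies in the intersection. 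Summing the decompositions over all bidegrees finishes the proof.

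The step I expect to be the main obstacle is the promotion $(\mathrm{Inf}_p(D)\otimes D'_q)\cap(D_p\otimes\mathrm{Inf}_q(D')) = \mathrm{Inf}_p(D)\otimes\mathrm{Inf}_q(D')$, since tensoring does not commute with intersection in general and a spurious contribution controlled by $\mathrm{Tor}_R$ can appear. The way I would remove this obstruction is to observe that $\mathrm{Inf}_p(D)$ is a \emph{pure} submodule of the free module $C_p$: if $r\,\partial x\in D_{p-1}$ with $r\neq 0$, then because $D_{p-1}$ is spanned by a subset of the basis and $R$ is an integral domain, already $\partial x\in D_{p-1}$. Purity over the PID $R$ makes $\mathrm{Inf}_p(D)$ a direct summand of $C_p$, and likewise $\mathrm{Inf}_q(D')$ of $C'_q$; the intersection of the two tensor submodules then splits as the single common summand $\mathrm{Inf}_p(D)\otimes\mathrm{Inf}_q(D')$, exactly as needed. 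This is the only place where the principal-ideal-domain hypothesis and the graded-subset (coordinate) structure are essential.
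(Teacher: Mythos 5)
Your proof is correct, and it takes a genuinely different route from the paper's. The paper expands an element $g$ of the intersection in chosen bases of the $D_{p_i}$ and $D'_{q_i}$ and argues term by term: either a term $\sigma_1\otimes\tau_1$ already satisfies $\partial\sigma_1\in D_{p_1-1}$ and $\partial'\tau_1\in D'_{q_1-1}$ (its Case 1), or a ``bad'' face $d_{k_1}(\sigma_1)\notin D_{p_1-1}$ must be cancelled inside $(\partial\otimes\partial')_n(g)$, which forces a relation $\sum_{(j,k_j)}(-1)^{k_j}r_j=0$ among coefficients of terms sharing that face and the same second factor $\tau_1$, and those terms are regrouped into a single $x_1\otimes y_1$ (its Case 2). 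You replace this face-by-face cancellation with three structural steps: (i) projection onto the blocks $D_a\otimes E'_b$ and $E_a\otimes D'_b$, showing each homogeneous component $z_{p,q}$ lies in the intersection --- this cleanly handles the possible interference between different bidegrees, which the paper's Case 2 never explicitly confronts; (ii) reading coefficients along a basis of $D'_q$ (resp.\ $D_p$), giving $z_{p,q}\in\mathrm{Inf}_p(D)\otimes D'_q$ and $z_{p,q}\in D_p\otimes\mathrm{Inf}_q(D')$; (iii) a purity/direct-summand argument identifying the intersection of those two submodules with $\mathrm{Inf}_p(D)\otimes\mathrm{Inf}_q(D')$. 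Your route is fully rigorous where the paper's Case 2 is a sketch, and it proves en route the stronger identity $(D\otimes D')_n\cap(\partial\otimes \partial')_n^{-1}(D\otimes D')_{n-1}=(\mathrm{Inf}_*(D,C)\otimes \mathrm{Inf}_*(D',C'))_n$, i.e.\ Proposition~\ref{pr-2.1}, which the paper derives separately; the paper's route, in exchange, is shorter and stays inside the hyperedge picture reused later in Lemma~\ref{le-6}.

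One further remark, in your favor. Your standing hypothesis that $D_p$ and $D'_q$ are spanned by subsets of fixed bases of $C_p$ and $C'_q$ is not in the lemma's literal statement, but it is genuinely necessary, and the paper's own proof uses it tacitly (face maps $d_k$ and the cancellation argument only make sense when the basis elements of $D$ are themselves simplices of $C$). For arbitrary graded submodules the statement is false: take $R=\mathbb{Z}$, let $C$ be $\mathbb{Z}s\xrightarrow{\partial}\mathbb{Z}a\xrightarrow{\partial}\mathbb{Z}b$ with $\partial s=a$, $\partial a=0$, let $C'$ be $\mathbb{Z}a'\xrightarrow{\partial'}\mathbb{Z}b'$ with $\partial'a'=b'$, and put $D_2=\mathbb{Z}s$, $D_1=2\mathbb{Z}a$, $D_0=0$, $D'_1=\mathbb{Z}a'$, $D'_0=2\mathbb{Z}b'$. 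Then
\begin{equation*}
z=2a\otimes a'+s\otimes 2b'
\end{equation*}
lies in $(D\otimes D')_2$ and satisfies $(\partial\otimes\partial')_2 z=-2\,a\otimes b'+2\,a\otimes b'=0$, so $z$ is in the intersection; but its bidegree components $2a\otimes a'$ and $s\otimes 2b'$ are not, since each has boundary $\mp 2\,a\otimes b'\notin D_1\otimes D'_0=4\mathbb{Z}\,(a\otimes b')$, while any decomposition of the homogeneous form required by the lemma would force each bidegree component of $z$ to lie in the intersection. So making the coordinate hypothesis explicit, as you do, is not a loss of generality but a necessary sharpening of the statement.
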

\begin{proof}
Since $D_{p_i}$ and $D'_{q_i}$ are free $R$-modules, we can choose an arbitrary  basis $\mathcal{B}_{p_i}$ of $D_{p_i}$ and an  arbitrary  basis $\mathcal{U}_{q_i}$ of $D_{q_i}$ respectively.

Let
\begin{eqnarray}
g=r_1(\sigma_1\otimes \tau_1)+\cdots +r_l(\sigma_l\otimes \tau_l)
\end{eqnarray}
be an element in
\begin{eqnarray*}
(D\otimes D')_n\cap(\partial\otimes \partial')_n^{-1}(D\otimes D')_{n-1}
\end{eqnarray*}
where $\sigma_i\in \mathcal{B}_{p_i}$, $\tau_i\in \mathcal{U}_{q_i}$ and $r_i\in R$.

Notice that
\begin{eqnarray*}
(\partial\otimes \partial')_n(g)\in (D\otimes D')_{n-1}
\end{eqnarray*}
and
\begin{eqnarray*}
(\partial\otimes \partial')_n(\sigma_1\otimes \tau_1)=\partial \sigma_1\otimes \tau_1+(-1)^{\mathrm{deg}(\sigma_1)}\sigma_1\otimes \partial'\tau_1.
\end{eqnarray*}

We consider the following  two cases.

{\sc Case~1}. $\partial \sigma_1\in D_{p_i-1}$ and $\partial'\tau_1\in D'_{q_i-1}$.  Then
\begin{eqnarray}
\sigma_1\otimes \tau_1\in (D\otimes D')_n\cap(\partial\otimes \partial')_n^{-1}(D\otimes D')_{n-1}.
\end{eqnarray}
Set $x_1=r_1\sigma_1$ and $y_1=\tau_1$.

{\sc Case~2}. $\partial \sigma_1\notin D_{p_1-1}$ or $\partial'\tau_1\notin D'_{q_1-1}$. Without loss of  generality, we can assume that $\partial \sigma_1\notin D_{p_1-1}$. Then there exists $0\leq k_1\leq p_1$ such that $d_{k_1}(\sigma_1)\notin D_{p_1-1}$.  Since $(\partial\otimes \partial')_n(g)\in (D\otimes D')_{n-1}$, then 
\begin{eqnarray*}
\sum\limits_{\{(j, k_j)\mid d_{k_j}(\sigma_j)=d_{k_1}(\sigma_1)\}}(-1)^{k_j}r_j=0,
\end{eqnarray*}
with $\tau_1=\tau_j$, $p_1=p_j$ and $q_1=q_j$. Hence we get a linear combination $x_1=(r_1\sigma_1+r_j\sigma_j+\cdots)$ of finite elements in $\mathcal{B}_{p_1}$ and $y_1=\tau_1$.

Combing Case 1 and Case 2, the lemma follows.
\end{proof}

We give an example to illustrate the form of element in $(D\otimes D')_n\cap(\partial\otimes \partial')_n^{-1}(D\otimes D')_{n-1}$.
\begin{example}\label{ex-01}
Let
\begin{eqnarray*}
\mathcal{H}&=&\{\{v_1\}, \{v_3\}, \{v_1,v_2\}, \{v_2,v_3\}, \{v_1,v_3\}\},\\
\quad\mathcal{H}'&=&\{\{w_2\}, \{w_3\}, \{w_1,w_2\}, \{w_2,w_3\}, \{w_1,w_3\}\}
\end{eqnarray*}
be two hypergraphs. Let $D, D'$ be the free $\mathbb{Z}$-modules generated by $\mathcal{H}$ and $\mathcal{H}'$ respectively.
Let
\begin{eqnarray*}
&&g=\{v_1,v_2\}\otimes \{w_2,w_3\}+\{v_1,v_3\}\otimes\{w_1,w_3\}+\{v_2,v_3\}\otimes\{w_2,w_3\}\\
&&~~~~~~~~~-\{v_1,v_3\}\otimes\{w_1,w_2\}+\{v_1,v_3\}\otimes\{w_2,w_3\}.
\end{eqnarray*}
It can be directly verified that
\begin{eqnarray*}
g\in (D\otimes D')_2\cap(\partial\otimes \partial')_2^{-1}(D\otimes D')_1,~\{v_1,v_3\}\otimes\{w_2,w_3\}\in (D\otimes D')_2\cap(\partial\otimes \partial')_2^{-1}(D\otimes D')_1
\end{eqnarray*}
and none of
\begin{eqnarray*}
\{v_1,v_2\}\otimes \{w_2,w_3\},& \{v_1,v_3\}\otimes\{w_1,w_3\},\\
\{v_2,v_3\}\otimes\{w_2,w_3\},&\{v_1,v_3\}\otimes\{w_1,w_2\}
\end{eqnarray*}
is in $(D\otimes D')_2\cap(\partial\otimes \partial')_2^{-1}(D\otimes D')_1$. But we can express $g$ as
\begin{eqnarray*}
g=(\{v_1,v_2\}+\{v_2,v_3\})\otimes \{w_2,w_3\}+\{v_1,v_3\}\otimes (\{w_1,w_3\}-\{w_1,w_2\})+\{v_1,v_3\}\otimes\{w_2,w_3\},
\end{eqnarray*}
in which each term
\begin{eqnarray*}
(\{v_1v_2\}+\{v_2v_3\}])\otimes \{w_2w_3\}, ~~ \{v_1v_3\}\otimes (\{w_1w_3\}-\{w_1w_2\},~~\{v_1,v_3\}\otimes\{w_2,w_3\}
\end{eqnarray*}
is  in $(D\otimes D')_2\cap(\partial\otimes \partial')_2^{-1}(D\otimes D')_1$.
\end{example}
\smallskip

\begin{proposition}\label{pr-2.1}
Let $C,C'$ be chain complexes of finitely generated free $R$-modules. Let $D,D'$ be graded sub-$R$-modules of $C,C'$, respectively. Then for any $n\geq 0$, we have
\begin{eqnarray*}
\text{Inf}_n(D\otimes D',C\otimes C')=(\text{Inf}_*(D,C)\otimes \text{Inf}_*(D',C'))_n.
\end{eqnarray*}
\end{proposition}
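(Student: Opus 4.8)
The plan is to prove the two inclusions separately, writing $I := \mathrm{Inf}_*(D,C)$ and $I' := \mathrm{Inf}_*(D',C')$, where by definition $\mathrm{Inf}_p(D,C) = D_p \cap \partial_p^{-1}(D_{p-1})$; these are graded sub-$R$-modules of $C$ and $C'$, and $(I\otimes I')_n = \bigoplus_{p+q=n} I_p\otimes I'_q$ in the notation of this section. The easy inclusion is $(I\otimes I')_n \subseteq \mathrm{Inf}_n(D\otimes D', C\otimes C')$. First I would take generators $u\in I_p$ and $v\in I'_q$ with $p+q=n$, so that $u\in D_p$, $\partial u\in D_{p-1}$, $v\in D'_q$, $\partial' v\in D'_{q-1}$. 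Applying the Leibniz rule $(\partial\otimes\partial')(u\otimes v) = \partial u\otimes v + (-1)^p u\otimes\partial' v$ and reading off the two summands shows that $(\partial\otimes\partial')(u\otimes v)\in D_{p-1}\otimes D'_q + D_p\otimes D'_{q-1}\subseteq (D\otimes D')_{n-1}$, whence $u\otimes v\in\mathrm{Inf}_n(D\otimes D', C\otimes C')$. Additivity then gives the inclusion.

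For the reverse inclusion I would start from an arbitrary $g\in\mathrm{Inf}_n(D\otimes D', C\otimes C') = (D\otimes D')_n\cap(\partial\otimes\partial')_n^{-1}(D\otimes D')_{n-1}$ and apply Lemma~\ref{le-2.2} to rewrite $g = \sum_{i=1}^m x_i\otimes y_i$, with $\deg x_i = p_i$, $\deg y_i = q_i$, $p_i+q_i = n$, where (after discarding zero terms) each $x_i, y_i$ is nonzero and each $x_i\otimes y_i$ again lies in $\mathrm{Inf}_n(D\otimes D', C\otimes C')$. The point of invoking Lemma~\ref{le-2.2} is precisely that a careless expression of $g$ as a sum of pure tensors need not have its summands in $\mathrm{Inf}$ (Example~\ref{ex-01}), so the term-by-term argument below would be unavailable without this regrouping. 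For a single such tensor the Leibniz rule gives $(\partial\otimes\partial')(x_i\otimes y_i) = \partial x_i\otimes y_i + (-1)^{p_i} x_i\otimes\partial' y_i$, and its two summands sit in the distinct bidegrees $(p_i-1, q_i)$ and $(p_i, q_i-1)$ of $C\otimes C'$. Since $(D\otimes D')_{n-1} = \bigoplus_{r+s=n-1} D_r\otimes D'_s$ respects this bigrading, the containment $(\partial\otimes\partial')(x_i\otimes y_i)\in(D\otimes D')_{n-1}$ forces the two summands separately into their graded pieces, namely $\partial x_i\otimes y_i\in D_{p_i-1}\otimes D'_{q_i}$ and $x_i\otimes\partial' y_i\in D_{p_i}\otimes D'_{q_i-1}$.

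It then remains to descend from these tensor-level containments to the factor-level statements $\partial x_i\in D_{p_i-1}$ and $\partial' y_i\in D'_{q_i-1}$, i.e.\ $x_i\in I_{p_i}$ and $y_i\in I'_{q_i}$; summing afterward yields $g\in(I\otimes I')_n$. Here I would use that $D_{p_i-1}$ is a direct summand of $C_{p_i-1}$ (in the hypergraph setting $D$ is generated by a subset of the standard basis, so $C_{p_i-1} = D_{p_i-1}\oplus E$ with $E$ free), write $\partial x_i = d + e$ accordingly, and observe that $\partial x_i\otimes y_i\in D_{p_i-1}\otimes D'_{q_i}$ kills the $E\otimes C'_{q_i}$-component, i.e.\ $e\otimes y_i = 0$; as $E$ and $C'_{q_i}$ are free over the integral domain $R$ and $y_i\neq 0$, this forces $e=0$ and hence $\partial x_i = d\in D_{p_i-1}$, with the symmetric argument for $y_i$. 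This last step is the main obstacle: it is exactly where the hypotheses that $R$ is a principal ideal domain (an integral domain already suffices to cancel the nonzero factor $y_i$) and that $D$ embeds in $C$ as a coordinate submodule (so that the relevant quotient is free and the summand $E$ exists) become indispensable. For genuinely arbitrary submodules the implication $a\otimes y\in D_{p-1}\otimes D'_q\Rightarrow a\in D_{p-1}$ can fail through divisibility, so it is this structural input — not the bidegree bookkeeping — that carries the weight of the proof.
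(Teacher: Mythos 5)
Your proof follows the same route as the paper's: the forward inclusion via the Leibniz rule (the paper gets it by substituting $D\cap\partial^{-1}D$, $D'\cap\partial'^{-1}D'$ into (\ref{eq-2.11})), and the reverse inclusion by invoking Lemma~\ref{le-2.2} to write $g$ as a sum of pure tensors $x_i\otimes y_i$ each lying in $\mathrm{Inf}_n(D\otimes D',C\otimes C')$, separating $(\partial\otimes\partial')_n(x_i\otimes y_i)$ by bidegree, and then descending from $(\partial x_i)\otimes y_i\in D_{p_i-1}\otimes D'_{q_i}$ and $x_i\otimes(\partial' y_i)\in D_{p_i}\otimes D'_{q_i-1}$ to the factor-level statements $\partial x_i\in D_{p_i-1}$, $\partial' y_i\in D'_{q_i-1}$. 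The one genuine difference is the treatment of this last descent: the paper asserts it in a single word (``Hence $x\in D_p\cap\partial_p^{-1}D_{p-1}$, $y\in D'_q\cap(\partial'_q)^{-1}D'_{q-1}$'') with no argument, whereas you justify it by splitting $C_{p_i-1}=D_{p_i-1}\oplus E$ and cancelling the nonzero factor $y_i$ over the integral domain $R$ --- which requires $D_{p_i-1}$ to sit inside $C_{p_i-1}$ as a summand spanned by part of a basis. Your closing diagnosis that this structural input, not the bidegree bookkeeping, carries the weight of the proof is correct and important: without it, not only does the descent fail, the Proposition as stated is false. Take $R=\mathbb{Z}$, let $C=C'$ be the complex $\mathbb{Z}\langle e\rangle\xrightarrow{\partial}\mathbb{Z}\langle v\rangle$ with $\partial e=v$, and let $D=D'$ have $D_1=\mathbb{Z}\langle e\rangle$, $D_0=2\mathbb{Z}\langle v\rangle$. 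Then $\mathrm{Inf}_1(D,C)=2\mathbb{Z}e$, so $(\mathrm{Inf}_*(D,C)\otimes \mathrm{Inf}_*(D',C'))_2=4\mathbb{Z}\,(e\otimes e')$, yet $2(e\otimes e')\in\mathrm{Inf}_2(D\otimes D',C\otimes C')$ because $(\partial\otimes\partial')\bigl(2(e\otimes e')\bigr)=2(v\otimes e')-2(e\otimes v')\in(D\otimes D')_1$; the two sides disagree. So the Proposition holds only under the coordinate-submodule hypothesis you isolated --- which does hold in every application in the paper, where $D=R(\mathcal{H})_*$ is spanned by hyperedges inside $C=C_*(\mathcal{K}_{\mathcal{H}};R)$ --- and your write-up, rather than merely reproducing the paper's argument, supplies exactly the step that the paper's proof is missing.
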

\begin{proof}

By substituting $D$ and $D'$ with $D\cap \partial^{-1}D$  and $D'\cap \partial'^{-1}D'$ in (\ref{eq-2.11}) respectively, we have  that
\begin{eqnarray*}\label{eq-6.2}
&&(\partial\otimes \partial')_n ((D\cap \partial^{-1}D)\otimes (D'\cap \partial'^{-1}D'))_n \\
&&=\sum \Big(\partial_p(D\cap \partial^{-1}D)_p\otimes (D'\cap\partial'^{-1}D')_q+(D\cap \partial^{-1}D)_p\otimes\partial'_q (D'\cap\partial'^{-1}D')_q\Big)\\
&&\subseteq (D\otimes D')_{n-1}.
\end{eqnarray*}
Moreover,
\begin{eqnarray*}\label{eq-6.1}
((D\cap \partial^{-1}D)\otimes (D'\cap \partial'^{-1}D'))_n \subseteq (D\otimes D')_{n}.
\end{eqnarray*}
Hence
\begin{eqnarray*}
((D\cap \partial^{-1}D)\otimes (D'\cap \partial'^{-1}D'))_n  \subseteq (D\otimes D')_{n}\cap (\partial\otimes \partial')_n (D\otimes D')_{n-1},
\end{eqnarray*}
which implies that
\begin{equation*}
(\text{Inf}_*(D,C)\otimes \text{Inf}_*(D',C'))_n\subseteq\text{Inf}_n(D\otimes D',C\otimes C').
\end{equation*}

On the other hand, for each factor
\begin{eqnarray*}
x\otimes y\in  (D\otimes D')_n\cap(\partial \otimes \partial')_{n}^{-1}(D\otimes D')_{n-1},~\mathrm{deg}(x)=p,~\mathrm{deg}(y)=q, ~p+q=n,
\end{eqnarray*}
we  have that
\begin{equation*}
  (\partial \otimes \partial')_n (x\otimes y)=((\partial_p x)\otimes y+(-1)^p x\otimes (\partial'_q y))\in (D\otimes D')_{n-1}.
\end{equation*}
Then
\begin{equation*}\label{eq-2.2.2}
  (\partial_p x)\otimes y\in (D\otimes D')_{n-1},~ x\otimes (\partial'_q y)\in (D\otimes D')_{n-1}.
\end{equation*}
Hence
\begin{eqnarray*}
 x \in D_p\cap \partial_p^{-1} D_{p-1}, \quad y \in D'_q \cap {\partial'}_q^{-1} D'_{q-1}
 \end{eqnarray*}
and
\begin{equation*}
  (x\otimes y)\in ((D\cap \partial^{-1}D)\otimes (D'\cap \partial'^{-1}D'))_n.
\end{equation*}

Therefore,  by Lemma~\ref{le-2.2},
\begin{eqnarray*}
(D\otimes D')_n\cap(\partial \otimes \partial')_n^{-1}(D\otimes D')_{n-1}\subseteq ((D\cap \partial^{-1}D)\otimes (D'\cap \partial'^{-1}D'))_n.
\end{eqnarray*}
which implies that
\begin{equation*}
\text{Inf}_n(D\otimes D',C\otimes C')\subseteq(\text{Inf}_*(D,C)\otimes \text{Inf}_*(D',C'))_n.
\end{equation*}

The proposition is proved.
\bigskip
\end{proof}

\section{A K\"{u}nneth Formula for Product of Hypergraphs}\label{sec-4}
In this section, we define the product of hypergraphs and prove a K\"{u}nneth formula for product of hypergraphs.

\smallskip

\subsection{Product of Hypergraphs}
 We  define the product of hypergraphs and  give the connection between the product of hypergarphs and the Cartesian product of
 their associated simplicial complexes, in Proposition~\ref{prop1}.
\begin{definition}
Let $\mathcal{H},\mathcal{H}'$ be hypergraphs. Define the \emph{product of hypergraphs} $\mathcal{H}\boxtimes\mathcal{H}'$ as a hypergraph whose elements are in the form of
\begin{equation*}
  \omega=\{(v_{\alpha(0)},w_{\beta(0)}),(v_{\alpha(1)},w_{\beta(1)})\cdots,(v_{\alpha(p+q)},w_{\beta(p+q)})\}
\end{equation*}
for all $\sigma=\{v_{0},\cdots,v_{p}\}\in \mathcal{H},\tau=\{w_{0},\cdots,w_{q}\}\in \mathcal{H}'$, where for each $i$ either
\begin{equation*}
  \left\{
    \begin{array}{ll}
      \alpha(i+1)=\alpha(i)  \\
      \beta(i+1)=\beta(i)+1
    \end{array}
  \right.\quad  \textmd{or~} \quad \left\{
    \begin{array}{ll}
      \alpha(i+1)=\alpha(i)+1  \\
      \beta(i+1)=\beta(i)
    \end{array}
  \right.
\end{equation*}
\end{definition}

Note that if  $\mathcal{H},\mathcal{H}'$ are both simplicial complexes, then  the product $\mathcal{H}\boxtimes \mathcal{H}'$ is not always a simplicial complex. But its associated simplicial complex
coincides with the Cartesian product of simplicial complexes. That is, $\mathcal{K}_{\mathcal{H}\boxtimes \mathcal{H}'}=\mathcal{H}\times \mathcal{H}'$ when $\mathcal{H},\mathcal{H}'$  are both simplicial complexes.  More generally, we have that
\begin{proposition}\label{prop1}
Let $\mathcal{H}$, $\mathcal{H}'$ be hypergraphs. Then $\mathcal{K}_{\mathcal{H}\boxtimes \mathcal{H}'}=\mathcal{K}_{\mathcal{H}}\times \mathcal{K}_{\mathcal{H}'}$.
\end{proposition}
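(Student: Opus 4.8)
The plan is to prove the set equality $\mathcal{K}_{\mathcal{H}\boxtimes\mathcal{H}'}=\mathcal{K}_{\mathcal{H}}\times\mathcal{K}_{\mathcal{H}'}$ by unwinding both sides to explicit descriptions of their simplices and showing these coincide. Recall that $\mathcal{K}_{\mathcal{H}}$ consists of all nonempty subsets $\tau\subseteq\sigma$ with $\sigma\in\mathcal{H}$, and likewise for $\mathcal{K}_{\mathcal{H}'}$; the Cartesian product $\mathcal{K}_{\mathcal{H}}\times\mathcal{K}_{\mathcal{H}'}$ is then built, as in Section~\ref{sec-2}, from staircase paths $\gamma$ in the grid determined by pairs of simplices of the two complexes. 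I would first give a clean combinatorial characterization: a subset $\omega$ of $V_\mathcal{H}\times V_{\mathcal{H}'}$ is a simplex of $\mathcal{K}_{\mathcal{H}}\times\mathcal{K}_{\mathcal{H}'}$ exactly when it arises as $\eta(\gamma)$ for some monotone staircase path $\gamma$ sitting inside a grid coming from a simplex $\sigma'\in\mathcal{K}_{\mathcal{H}}$ and a simplex $\tau'\in\mathcal{K}_{\mathcal{H}'}$, where monotonicity means the vertices are indexed so that both coordinates are non-decreasing and each step increases exactly one coordinate by one.

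For the forward inclusion $\mathcal{K}_{\mathcal{H}\boxtimes\mathcal{H}'}\subseteq\mathcal{K}_{\mathcal{H}}\times\mathcal{K}_{\mathcal{H}'}$, I would take a simplex of the left side, i.e.\ a nonempty subset $\rho\subseteq\omega$ where $\omega$ is a generating hyperedge of $\mathcal{H}\boxtimes\mathcal{H}'$ built from $\sigma\in\mathcal{H}$, $\tau\in\mathcal{H}'$, and a staircase. Since $\sigma\in\mathcal{H}\subseteq\mathcal{K}_{\mathcal{H}}$ and $\tau\in\mathcal{H}'\subseteq\mathcal{K}_{\mathcal{H}'}$, the path defining $\omega$ is literally a path in the grid for $\sigma$ and $\tau$, so $\omega=\eta(\gamma)$ is a simplex of $\mathcal{K}_{\mathcal{H}}\times\mathcal{K}_{\mathcal{H}'}$; then any nonempty $\rho\subseteq\omega$ is a face of a simplex of the Cartesian product, hence itself a simplex there. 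For the reverse inclusion $\mathcal{K}_{\mathcal{H}}\times\mathcal{K}_{\mathcal{H}'}\subseteq\mathcal{K}_{\mathcal{H}\boxtimes\mathcal{H}'}$, I would take a simplex $\eta(\gamma)$ of the Cartesian product with $\gamma$ a path in the grid of some $\sigma'\in\mathcal{K}_{\mathcal{H}}$, $\tau'\in\mathcal{K}_{\mathcal{H}'}$. By definition of the associated complex there exist $\sigma\in\mathcal{H}$ with $\sigma'\subseteq\sigma$ and $\tau\in\mathcal{H}'$ with $\tau'\subseteq\tau$; the grid for $(\sigma',\tau')$ embeds in the grid for $(\sigma,\tau)$, so the path $\gamma$ extends to (or is realized inside) the larger grid, exhibiting $\eta(\gamma)$ as a subset of a generating hyperedge $\omega$ of $\mathcal{H}\boxtimes\mathcal{H}'$. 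Hence $\eta(\gamma)\in\mathcal{K}_{\mathcal{H}\boxtimes\mathcal{H}'}$, and faces are handled as before.

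The step I expect to require the most care is the bookkeeping of the staircase paths under the passage from a face $(\sigma',\tau')$ to a containing hyperedge $(\sigma,\tau)$: I must verify that a monotone path in the small grid indeed lists vertices $(v_{\alpha(k)},w_{\beta(k)})$ satisfying precisely the two-case increment condition in the definition of $\boxtimes$, and that relabeling the vertices of the smaller simplices compatibly with the ordering inherited from $V_\mathcal{H}$ and $V_{\mathcal{H}'}$ does not disturb monotonicity. Here the assumption that $V_\mathcal{H}$ and $V_{\mathcal{H}'}$ are totally ordered is essential, since it pins down a canonical indexing of every simplex and makes the path-to-simplex correspondence $\eta$ well defined on the nose. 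Once this identification of indices is made explicit, both inclusions reduce to the observation that being a subset is transitive and that staircase paths in a subgrid are staircase paths in the ambient grid, so the two simplex sets agree and the proposition follows.
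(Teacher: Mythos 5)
Your overall strategy is the same as the paper's: the forward inclusion by observing that every generating hyperedge of $\mathcal{H}\boxtimes\mathcal{H}'$ is a staircase simplex of $\mathcal{K}_{\mathcal{H}}\times\mathcal{K}_{\mathcal{H}'}$ and then using closure under subsets (the paper invokes minimality of the associated complex, which amounts to the same thing), and the reverse inclusion by enlarging $\sigma'\in\mathcal{K}_{\mathcal{H}}$, $\tau'\in\mathcal{K}_{\mathcal{H}'}$ to hyperedges $\sigma\in\mathcal{H}$, $\tau\in\mathcal{H}'$ and realizing the given staircase inside a generating hyperedge of $\mathcal{H}\boxtimes\mathcal{H}'$, exactly as the paper does.

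However, two of your supporting claims are false as stated, and one of them sits at the crux of the argument. First, your ``clean combinatorial characterization'' is wrong: the simplices of $\mathcal{K}_{\mathcal{H}}\times\mathcal{K}_{\mathcal{H}'}$ are not exactly the sets $\eta(\gamma)$ with $\gamma$ a unit-step monotone path; they are all nonempty subsets of such sets. For instance, the diagonal edge $\{(v_0,w_0),(v_1,w_1)\}$ of $\Delta^{1}\times\Delta^{1}$ is a simplex of the product but is not $\eta(\gamma)$ for any unit-step path, since any such path joining the two corners must pass through an intermediate vertex; the paper's own examples in Sections~\ref{sec-2} and~\ref{sec-4} exhibit precisely this simplex. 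This slip is harmless in your write-up only because you also process faces of the $\eta(\gamma)$'s, which restores exactly the simplices your characterization omits. Second, and more seriously, the justification you end with --- that ``staircase paths in a subgrid are staircase paths in the ambient grid'' --- is not true, and it is the very step you flagged as delicate. If $\sigma'\subsetneq\sigma$, a unit step in the grid of $(\sigma',\tau')$ becomes a jump of size greater than one in the grid of $(\sigma,\tau)$, so the image of $\gamma$ is in general not a path in the ambient grid at all; it is only a monotone set of lattice points. What you actually need --- and what the paper supplies by writing down an explicit completion $\tilde{\omega}$ (the large displayed array in its proof) --- is the statement that any monotone set of points in the ambient grid can be completed to a full corner-to-corner staircase, e.g.\ by joining consecutive points with right-then-up segments and extending the path back to $(0,0)$ and forward to the opposite corner, with indices chosen compatibly with the total order on the vertex sets. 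With that completion argument substituted for your closing slogan, your proof is complete and coincides with the paper's.
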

\begin{proof}
Note that $\mathcal{H}\boxtimes \mathcal{H}'\subseteq \mathcal{K}_{\mathcal{H}}\times \mathcal{K}_{\mathcal{H}'}$. Since $\mathcal{K}_{\mathcal{H}}\times \mathcal{K}_{\mathcal{H}'}$ is a simplicial complex, by definition, we have $\mathcal{K}_{\mathcal{H}\boxtimes \mathcal{H}'}\subseteq \mathcal{K}_{\mathcal{H}}\times \mathcal{K}_{\mathcal{H}'}$.

For each $\omega\in \mathcal{K}_{\mathcal{H}}\boxtimes \mathcal{K}_{\mathcal{H}'}$, we assume that
\begin{equation*}
  \omega=\{(v_{i_{\alpha(0)}},w_{j_{\beta(0)}}),(v_{i_{\alpha(1)}},w_{j_{\beta(1)}}),\cdots,(v_{i_{\alpha(p+q)}},w_{j_{\beta(p+q)}})\},
\end{equation*}
where $\sigma=\{v_{i_{0}},\cdots,v_{i_{p}}\}\in \mathcal{K}_{\mathcal{H}},\tau=\{w_{j_{0}},\cdots,w_{j_{q}}\}\in \mathcal{K}_{\mathcal{H}'}$. Thus there exist two elements $\tilde{\sigma}\in \mathcal{H}$ and $\tilde{\tau}\in  \mathcal{H}'$ such that
\begin{equation*}
  \sigma\subseteq\tilde{ \sigma},\quad \tau\subseteq \tilde{ \tau}.
\end{equation*}
We may write
\begin{equation*}
  \tilde{ \sigma}=\{v_{0},\cdots,v_{m}\},\quad \tilde{\tau}=\{w_{0},\cdots,w_{n}\}
\end{equation*}
and choose $i_{0},\cdots,i_{p}$ and $j_{0},\cdots,j_{q}$ corresponding to the order of elements in $\tilde{\sigma}$ and $\tilde{\tau}$. For $\tilde{\sigma}\in \mathcal{H}$ and $\tilde{\tau}\in  \mathcal{H}'$, we obtain an element  $ \tilde{\omega}\in\mathcal{H}\boxtimes \mathcal{H}'$ as follows.
\begin{equation*}
  \tilde{\omega}= \left\{\begin{array}{cccc}
(v_{0},w_{0}), & (v_{1},w_{0}), & \cdots & (v_{i_{\alpha(0)}},w_{0}) ,\\
(v_{i_{\alpha(0)}},w_{1}), & (v_{i_{\alpha(0)}},w_{2}), & \cdots & (v_{i_{\alpha(0)}},w_{j_{\beta(0)}}) ,\\
(v_{i_{\alpha(0)}+1},w_{j_{\beta(0)}}) , & (v_{i_{\alpha(0)}+2},w_{j_{\beta(0)}}),  & \cdots & (v_{i_{\alpha(1)}},w_{j_{\beta(0)}}) , \\
 (v_{i_{\alpha(1)}},w_{j_{\beta(0)}+1}), &  (v_{i_{\alpha(1)}},w_{j_{\beta(0)}+2}), & \cdots &  (v_{i_{\alpha(1)}},w_{j_{\beta(1)}}), \\
 \vdots&\vdots&\vdots&\vdots\\
(v_{i_{\alpha(p+q)}},w_{j_{\beta(p+q-1)}+1}), &(v_{i_{\alpha(p+q)}},w_{j_{\beta(p+q-1)}+2}), & \cdots & (v_{i_{\alpha(p+q)}},w_{j_{\beta(p+q)}}) ,\\
(v_{i_{\alpha(p+q)}+1},w_{j_{\beta(p+q)}}), &(v_{i_{\alpha(p+q)}+2},w_{j_{\beta(p+q)}}), & \cdots & (v_{m},w_{j_{\beta(p+q)}}), \\
 (v_{m},w_{j_{\beta(p+q)}+1}), &  (v_{m},w_{j_{\beta(p+q)}+2}), & \cdots & (v_{m},w_{n}) \\
                 \end{array}\right\}
\end{equation*}
It follows that $\omega\subseteq \tilde{\omega}$. Thus we have $\omega\in \mathcal{K}_{\mathcal{H}\boxtimes \mathcal{H}'}$. It shows that $\mathcal{K}_{\mathcal{H}}\boxtimes \mathcal{K}_{\mathcal{H}'}\subseteq \mathcal{K}_{\mathcal{H}\boxtimes \mathcal{H}'}$. Thus we have $\mathcal{K}_{\mathcal{H}}\times \mathcal{K}_{\mathcal{H}'}= \mathcal{K}_{\mathcal{K}_{\mathcal{H}}\boxtimes \mathcal{K}_{\mathcal{H}'}}\subseteq \mathcal{K}_{\mathcal{H}\boxtimes \mathcal{H}'}$. The desired result follows.
\end{proof}

Furthermore, to illustrate the relationship between the  product $\boxtimes$ of hypergraphs and the Cartesian product $\times$ of simplicial complexes more vividly, we give the following example.
\begin{example}
Let $\mathcal{H}=\{\{v_0\}, \{v_0,v_1\}\}$, $\mathcal{H'}=\{\{w_1\}, \{w_0,w_1\}\}$. Then
\begin{eqnarray*}
\mathcal{K}_{\mathcal{H}}&=&\{\{v_0\}, \{v_1\}, \{v_0,v_1\}\},\\
\mathcal{K}_{\mathcal{H'}}&=&\{\{w_0\}, \{w_1\}, \{w_0,w_1\}\}\\
\mathcal{H}\boxtimes \mathcal{H}'&=&\{\{(v_0,w_1)\}, \{(v_0,w_0), (v_0,w_1)\}, \{(v_0,w_1), (v_1,w_1)\},\\ &&\{(v_0,w_0),(v_1,w_0),(v_1,w_1)\}, \{(v_0,w_0),(v_0,w_1),(v_1,w_1)\}\},\\
\mathcal{K}_{\mathcal{H}}\boxtimes \mathcal{K}_{\mathcal{H}'}&=&\{\{(v_0,w_0)\}, \{(v_1,w_0)\}, \{(v_0,w_1)\}, \{(v_1,w_1)\},\\
&&\{(v_0,w_0),(v_1,w_0)\}, \{(v_0,w_1),(v_1,w_1)\}, \{(v_0,w_0),(v_0,w_1)\}, \{(v_1,w_0),(v_1,w_1)\},\\
&&\{(v_0,w_0),(v_1,w_0),(v_1,w_1)\}, \{(v_0,w_0),(v_0,w_1),(v_1,w_1)\}\},\\
\mathcal{K}_{\mathcal{H}}\times\mathcal{K}_{\mathcal{H}'}&=&\mathcal{K}_{\mathcal{H}} \boxtimes \mathcal{K}_{\mathcal{H}'}\cup \{(v_0,w_0),(v_1,w_1)\}.
\end{eqnarray*}
\end{example}

\subsection{Proof of K\"{u}nneth Formula for Hypergraphs}
Before giving the proof of Theorem~\ref{th-1.0}, we give some properties of the restrictions of  $\mu$ and $\nu$ on sub-chain complexes firstly. For convenience, we denote
\begin{eqnarray*}
\mathrm{Inf}_{n}(R(\mathcal{H})_{\ast}\otimes R(\mathcal{H}')_{\ast},C_{\ast}(\mathcal{K}_{\mathcal{H}};R)\otimes C_{\ast}(\mathcal{K}_{\mathcal{H}'};R))
\end{eqnarray*}
as
\begin{eqnarray*}
\mathrm{Inf}_{n}(R(\mathcal{H})_{\ast}\otimes R(\mathcal{H}')_{\ast})\label{eq-1a}
\end{eqnarray*}
and denote
\begin{eqnarray*}
\mathrm{Inf}_{n}(R(\mathcal{H}\boxtimes \mathcal{H}')_{\ast},C_{\ast}(\mathcal{K}_{\mathcal{H}}\times \mathcal{K}_{\mathcal{H}'};R))
\end{eqnarray*}
as
\begin{eqnarray*}
\mathrm{Inf}_{n}(R(\mathcal{H}\boxtimes \mathcal{H}')_{\ast})\label{eq-8.1}
\end{eqnarray*}
respectively in this subsection.

\smallskip
\begin{lemma}\label{le-6}
The restriction of chain map $\mu$:
\begin{eqnarray}\label{eq-61}
\mu\mid_{\mathrm{Inf}_{n}(R(\mathcal{H})_{\ast}\otimes R(\mathcal{H}')_{\ast})}
\end{eqnarray}
 and that of chain map $\nu$:
 \begin{eqnarray}\label{eq-62}
\nu\mid_{\mathrm{Inf}_{n}(R(\mathcal{H}\boxtimes \mathcal{H}')_{\ast})}
\end{eqnarray}
are still chain maps. Moreover,
\begin{equation}\label{eq-63}
  \nu\mid_{\mathrm{Inf}_{n}(R(\mathcal{H}\boxtimes \mathcal{H}')_{\ast})}\circ \mu|_{\mathrm{Inf}_{n}(R(\mathcal{H})_{\ast}\otimes R(\mathcal{H}')_{\ast})}=\mathrm{id},
\end{equation}
\end{lemma}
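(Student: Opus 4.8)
The plan is to verify three containments, after which everything is formal: first that $\mu$ carries $\mathrm{Inf}_{n}(R(\mathcal{H})_{\ast}\otimes R(\mathcal{H}')_{\ast})$ into $\mathrm{Inf}_{n}(R(\mathcal{H}\boxtimes\mathcal{H}')_{\ast})$, second that $\nu$ carries the latter back into the former, and third that the composite is the identity. Since Proposition~\ref{pr-3.111} already gives that $\mu$ and $\nu$ are chain maps on the ambient complexes $C_*(\mathcal{K}_{\mathcal{H}};R)\otimes C_*(\mathcal{K}_{\mathcal{H}'};R)$ and $C_*(\mathcal{K}_{\mathcal{H}}\times\mathcal{K}_{\mathcal{H}'};R)$, once the first two containments hold the restricted maps are automatically chain maps. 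Throughout I abbreviate $D=R(\mathcal{H})_{\ast}$, $D'=R(\mathcal{H}')_{\ast}$ and $E=R(\mathcal{H}\boxtimes\mathcal{H}')_{\ast}$, and I use that each infimum module is a subcomplex of its ambient complex: for $x\in\mathrm{Inf}_n$ one has $\partial^\times x\in E_{n-1}$ by definition and $\partial^\times(\partial^\times x)=0$, so $\partial^\times x\in E_{n-1}\cap(\partial^\times)^{-1}E_{n-2}=\mathrm{Inf}_{n-1}$.

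For $\mu$, the key point is the formula $\mu(\sigma^{(p)}\otimes\tau^{(q)})=\sum_\gamma(-1)^{|\gamma|}\eta(\gamma)$ of (\ref{eq-3.2.1}) together with the definition of $\boxtimes$: when $\sigma\in\mathcal{H}$ and $\tau\in\mathcal{H}'$, every simplex $\eta(\gamma)$ is by construction a hyperedge of $\mathcal{H}\boxtimes\mathcal{H}'$, so $\mu((D\otimes D')_n)\subseteq E_n$. Hence for $z\in\mathrm{Inf}_n(D\otimes D')$ we get $\mu(z)\in E_n$, and since $\partial^\times\mu(z)=\mu((\partial\otimes\partial')z)$ with $(\partial\otimes\partial')z\in(D\otimes D')_{n-1}$, also $\partial^\times\mu(z)\in E_{n-1}$; thus $\mu(z)\in E_n\cap(\partial^\times)^{-1}E_{n-1}=\mathrm{Inf}_n(E)$. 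Identity (\ref{eq-63}) is then immediate, because the composite is now defined and equals the ambient $\nu\circ\mu=\mathrm{id}$ of Proposition~\ref{pr-3.111} restricted to $\mathrm{Inf}_n(D\otimes D')$.

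For $\nu$, I take the standard Alexander-Whitney map, sending a simplex $\omega$ with vertices $(v_{\alpha(k)},w_{\beta(k)})$, $0\le k\le n$, to $\sum_{i=0}^{n}\{v_{\alpha(0)},\dots,v_{\alpha(i)}\}\otimes\{w_{\beta(i)},\dots,w_{\beta(n)}\}$ in the normalized complex. The crucial computation is that for a hyperedge $\omega=\eta(\gamma)$ of $\mathcal{H}\boxtimes\mathcal{H}'$ arising from $\sigma\in\mathcal{H}$ and $\tau\in\mathcal{H}'$, a summand survives only when its first-coordinate front face and second-coordinate back face are both nondegenerate; as $\alpha$ and $\beta$ are weakly increasing along the staircase, this forces the first $i$ steps to be horizontal and the remaining $n-i$ steps vertical, which can occur only when $\gamma$ is the distinguished right-then-up staircase, in which case the single surviving summand is exactly $\sigma\otimes\tau$. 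Consequently $\nu(\omega)\in\{\sigma\otimes\tau,\,0\}\subseteq(D\otimes D')_n$, so $\nu(E_n)\subseteq(D\otimes D')_n$ outright. For $w\in\mathrm{Inf}_n(E)$ we then have $\nu(w)\in(D\otimes D')_n$, and using $\partial^\times w\in\mathrm{Inf}_{n-1}(E)\subseteq E_{n-1}$ together with $(\partial\otimes\partial')\nu(w)=\nu(\partial^\times w)$ we get $(\partial\otimes\partial')\nu(w)\in(D\otimes D')_{n-1}$; hence $\nu(w)\in\mathrm{Inf}_n(D\otimes D')$.

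The main obstacle is the degeneracy computation for $\nu$: one must work in the normalized chain complex of the simplicial-set model and check carefully that every staircase other than the distinguished one maps to zero, and that the distinguished one contributes exactly $\sigma\otimes\tau$ with the correct sign. This is precisely what yields $\nu(E)\subseteq D\otimes D'$ and thereby makes the infimum condition automatic on the $\nu$ side. The $\mu$ side, by contrast, is a formal consequence of the product definition and the chain-map identity, and (\ref{eq-63}) then follows with no further work once $\mu(\mathrm{Inf}_n(D\otimes D'))\subseteq\mathrm{Inf}_n(E)$ is established.
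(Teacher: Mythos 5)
Your proof is correct, and on the decisive steps it takes a genuinely different route from the paper's. The opening part coincides: like the paper, you get $\mu(R(\mathcal{H})_{\ast}\otimes R(\mathcal{H}')_{\ast})\subseteq R(\mathcal{H}\boxtimes\mathcal{H}')_{\ast}$ from the staircase description of $\boxtimes$, and then promote module-level containments to containments of infimum complexes using the ambient chain-map identities. But from there the two arguments diverge. The paper asserts $\nu(R(\mathcal{H}\boxtimes\mathcal{H}')_{\ast})\subseteq R(\mathcal{H})_{\ast}\otimes R(\mathcal{H}')_{\ast}$ only as a ``straightforward calculation,'' whereas you actually supply it: fixing $\nu$ to be the standard front-face/back-face Alexander--Whitney map and counting degeneracies, you show $\nu(\eta(\gamma))$ is $\sigma\otimes\tau$ for the distinguished right-then-up staircase and $0$ for every other staircase. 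This is a real improvement, since the paper's $\nu$ is defined only up to acyclic-model choices, and both the containment and the on-the-nose identity $\nu\circ\mu=\mathrm{id}$ of Proposition~\ref{pr-3.111} genuinely require such a standard choice. Conversely, the paper spends the bulk of its proof on two explicit ``Steps'': decomposing elements of $\mathrm{Inf}_{n}(R(\mathcal{H})_{\ast}\otimes R(\mathcal{H}')_{\ast})$ via Lemma~\ref{le-2.2} and elements of $\mathrm{Inf}_{n}(R(\mathcal{H}\boxtimes\mathcal{H}')_{\ast})$ via a staircase cancellation analysis into sums of terms $x_i\otimes y_i$ (resp.\ $x_i\boxtimes y_i$), and then checking commutativity with the boundary term by term. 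You bypass all of this with the formal observation that the infimum complexes are subcomplexes and that a chain map between ambient complexes restricts to a chain map on any subcomplexes it preserves, after which the identity $\nu\circ\mu=\mathrm{id}$ restricts for free. Your route is shorter and logically cleaner; what the paper's longer computation buys is the explicit decomposition of elements of $\mathrm{Inf}_{n}(R(\mathcal{H}\boxtimes\mathcal{H}')_{\ast})$ as $\sum_i x_i\boxtimes y_i$ with $\partial x_i\in R(\mathcal{H})_{\ast}$ and $\partial' y_i\in R(\mathcal{H}')_{\ast}$, which is cited again (``Case 1/Case 2 of Step 2'') in the proof of Proposition~\ref{prop2}; on your route that combinatorial work would still need to be done there.
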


\begin{proof}
By the definition of Eilenberg-Zilber map, we have that for $\sigma\in \mathcal{H}$ and $\tau\in \mathcal{H}'$,
\begin{equation*}
 \mu(\sigma\otimes \tau)=\sum\limits_{\omega\in \mathcal{H}\boxtimes \mathcal{H}'}(-1)^{|\omega|}\omega\in R(\mathcal{H}\boxtimes \mathcal{H}')_{\ast},\quad 
\end{equation*}
where  $|\omega|$ is determined by $\omega$.
Hence
$$\mu(R(\mathcal{H})_{\ast}\otimes R(\mathcal{H}')_{\ast})\subseteq R(\mathcal{H}\boxtimes \mathcal{H}')_{\ast}.$$
On the other hand, by straightforward calculation, we have that
\begin{equation*}
  \nu(R(\mathcal{H}\boxtimes \mathcal{H}')_{\ast})\subseteq R(\mathcal{H})_{\ast}\otimes R(\mathcal{H}')_{\ast}.
\end{equation*}
Thus
\begin{equation*}
\mu(\mathrm{Inf}_{n}(R(\mathcal{H})_{\ast}\otimes R(\mathcal{H}')_{\ast})\subseteq \mathrm{Inf}_{n}(R(\mathcal{H}\boxtimes \mathcal{H}')_{\ast})
\end{equation*}
and
\begin{equation*}
\nu(\mathrm{Inf}_{n}(R(\mathcal{H}\boxtimes \mathcal{H}')_{\ast}\subseteq\mathrm{Inf}_{n}(R(\mathcal{H})_{\ast}\otimes R(\mathcal{H}')_{\ast})
\end{equation*}
Hence  (\ref{eq-61}) and (\ref{eq-62}) are well defined.

Next, we will show that (\ref{eq-61}) and (\ref{eq-62}) are chain maps. Consider the following diagram
\begin{eqnarray*}\label{eq-63}
\xymatrix{
\cdots\ar[r]&\mathrm{Inf}_{n}(R(\mathcal{H})_{\ast}\otimes R(\mathcal{H}')_{\ast}) \ar@/_/[d]_{\mu_n}
\ar[r]^{(\partial\otimes \partial')_n}
 & \mathrm{Inf}_{n-1}(R(\mathcal{H})_{\ast}\otimes R(\mathcal{H}')_{\ast})\ar@/_/[d]_{\mu_{n-1}}
 \ar[r]&\cdots\\
\cdots\ar[r] &\mathrm{Inf}_{n}(R(\mathcal{H}\boxtimes \mathcal{H}')_{\ast})\ar@/_/[u]_{\nu_n}\ar[r]^{\partial^\times_n}
& \mathrm{Inf}_{n-1}(R(\mathcal{H}\boxtimes \mathcal{H}')_{\ast})
\ar@/_/[u]_{\nu_{n-1}}\ar[r]&\cdots
}
\end{eqnarray*}

Step 1. We claim that (\ref{eq-61}) is a chain map.

By Lemma~\ref{le-2.2}, each element in $\mathrm{Inf}_{n}(R(\mathcal{H})_{\ast}\otimes R(\mathcal{H}')_{\ast})$ can be written as
\begin{equation*}
g=\sum\limits_{i=1}^{m}x_{i}\otimes y_{i},\quad \mathrm{deg}(x_i)=p_i,~\mathrm{deg}(y_i)=q_i, ~p_i+q_i=n,
\end{equation*}
where  $x_i$, $y_i$ are linear combinations of hyperedges in  $\mathcal{H}$ and $\mathcal{H}'$ respectively such that for each $1\leq i\leq m$,
\begin{eqnarray*}
(x_{i}\otimes y_{i})\in \mathrm{Inf}_{n}(R(\mathcal{H})_{\ast}\otimes R(\mathcal{H}')_{\ast})\label{eq-1a}.
\end{eqnarray*}

Hence
\begin{eqnarray*}
(\partial\otimes\partial')_n(x_i\otimes y_i)=\partial_{p_i} x_i\otimes y_i+(-1)^{p_i} x_i\otimes \partial'_{q_i} y_i
\end{eqnarray*}
is an element in $(R(\mathcal{H})_{\ast}\otimes R(\mathcal{H}')_{\ast})_{n-1}$ and
\begin{eqnarray*}
(\partial\otimes\partial')_n(x_i\otimes y_i)\in \mathrm{Inf}_{n-1}(R(\mathcal{H})_{\ast}\otimes R(\mathcal{H}')_{\ast}).
\end{eqnarray*}

On the other hand, we have that
\begin{eqnarray*}
\mu_n(x_i\otimes y_i)=x_i\boxtimes y_i\in R(\mathcal{H}\boxtimes \mathcal{H}')_{\ast}\subseteq C_{\ast}(\mathcal{K}_{\mathcal{H}}\times \mathcal{K}_{\mathcal{H}'};R).
\end{eqnarray*}
and
\begin{eqnarray*}
\partial_n^{\times}(x_i\boxtimes y_i)&=&\big((\partial_{p_i} x_i)\boxtimes y_i+(-1)^{p_i}x_i\boxtimes (\partial'_{q_i}y_i)\big)\\
&&\in R(\mathcal{H}\boxtimes\mathcal{H}')_{n-1}.
\end{eqnarray*}

Hence
\begin{eqnarray*}
(\mu_{n-1}\circ(\partial\otimes\partial')_n)(x_i\otimes y_i)&=&\mu_{n-1}\big((\partial_{p_i} x_i)\otimes y_i+(-1)^{p_i} x_i\otimes (\partial'_{q_i} y_i)\big)\\
&=&(\partial_{p_i} x_i)\boxtimes y_i+(-1)^{p_i}x_i\boxtimes (\partial'_{q_i} y_i)\\
&=&(\partial_n^{\times}\circ \mu_n)(x_i\otimes y_i),
\end{eqnarray*}
which implies  that  $(\ref{eq-61})$ is a chain map. We  have obtained  the  proof of assertion in Step 1.

Step 2. We claim that $(\ref{eq-62})$ is a chain map.

Let $g=\sum r_i\omega_i$ be an arbitrary element  in $\mathrm{Inf}_{n}(R(\mathcal{H}\boxtimes \mathcal{H}')_{\ast})$ where
\begin{equation*}
  \omega_i=\{(v_{\alpha(0)}^{(i)},w_{\beta(0)}^{(i)}),(v_{\alpha(1)}^{(i)},w_{\beta(1)}^{(i)})\cdots,(v_{\alpha(p_i+q_i)}^{(i)},w_{\beta(p_i+q_i)}^{(i)})\}\in (\mathcal{H}\boxtimes \mathcal{H}')_n
\end{equation*}
for some
\begin{eqnarray*}
\sigma_i=\{v_{\alpha(0)}^{(i)},v_{\alpha(1)}^{(i)},\cdots,v_{\alpha(p_i)}^{(i)}\}\in \mathcal{H}_p
\end{eqnarray*}
 and
 \begin{eqnarray*}
 \tau_i=\{w_{\beta(0)}^{(i)},w_{\beta(1)}^{(i)},\cdots,w_{\beta(q_i)}^{(i)}\}\in\mathcal{H}'_q
 \end{eqnarray*}
 with $p_i+q_i=n$, $r_i\in R$. There are two cases.

{\sc Case 1}. $p_i\geq 1$ and  $q_i\geq1$. Then there must exist some $1\leq k\leq n$  such that either
\begin{equation*}\label{eq-84}
  \left\{
    \begin{array}{ll}
       \alpha(k+1)=\alpha(k)+1=\alpha(k-1)+1  \\
       \beta(k+1)=\beta(k)=\beta(k-1)+1
    \end{array}
  \right.
  \end{equation*}
or
\begin{equation*}\label{eq-90}
  \left\{
    \begin{array}{ll}
       \alpha(k+1)=\alpha(k)=\alpha(k-1)+1  \\
       \beta(k+1)=\beta(k)+1=\beta(k-1)+1.
    \end{array}
  \right.
\end{equation*}
Without loss of generality, suppose
\begin{equation*}
  \omega_i=\{\cdots,(v_{\alpha(k-1)}^{(i)},w_{\beta(k-1)}^{(i)}), (v_{\alpha(k)}^{(i)},w_{\beta(k-1)+1}^{(i)}), (v_{\alpha(k-1)+1}^{(i)},w_{\beta(k-1)+1}^{(i)}),\cdots\}.
\end{equation*}

Since $\partial_n^{\times} g\in R(\mathcal{H}\boxtimes\mathcal{H}')_{n-1}$ and $d_k(\omega_i)\notin (\mathcal{H}\boxtimes\mathcal{H}')_{n-1}$, by an  argument similar to \cite[P. 66]{yau1}, it follows that
\begin{equation*}
\tilde\omega_i=\{\cdots,(v_{\alpha(k-1)}^{(i)},w_{\beta(k-1)}^{(i)}), (v_{\alpha(k-1)+1}^{(i)},w_{\beta(k-1)}^{(i)}), (v_{\alpha(k-1)+1}^{(i)},w_{\beta(k-1)+1}^{(i)}),\cdots\}
\end{equation*}
is also a factor of  $g$ and all elements of $\sigma_i\boxtimes\tau_i$ are factors of $g$.

For any $0\leq j\leq n$ such that either $\alpha(j+1)\not=\alpha(j-1)+1 $ or $\beta(j+1)\not=\beta(j-1)+1$, $d_j(\omega)$ must be in one of the following forms
\begin{eqnarray*}
&&\{\cdots,(v_{\alpha(j-1)}^{(i)},w_{\beta(j-1)}^{(i)}), \widehat{(v_{\alpha(j-1)}^{(i)},w_{\beta(j)}^{(i)})}, (v_{\alpha(j-1)}^{(i)},w_{\beta(j-1)+2}^{(i)}),\cdots\},j>0,\\
&&\{\cdots,(v_{\alpha(j-1)}^{(i)},w_{\beta(j-1)}^{(i)}), \widehat{(v_{\alpha(j)}^{(i)},w_{\beta(j-1)}^{(i)})}, (v_{\alpha(j-1)+2}^{(i)},w_{\beta(j-1)}^{(i)}),\cdots\},j>0,\\
&&\{\widehat{(v_{\alpha(0)}^{(i)},w_{\beta(0)}^{(i)})},(v_{\alpha(0)}^{(i)},w_{\beta(0)+1}^{(i)}),\cdots\}, j=0,\\
&&\{\widehat{(v_{\alpha(0)}^{(i)},w_{\beta(0)}^{(i)})},(v_{\alpha(0)+1}^{(i)},w_{\beta(0)}^{(i)}),\cdots\}, j=0,
\end{eqnarray*}
which are factors of either
\begin{eqnarray}\label{eq-001}
\sigma_i\boxtimes \{w_{\beta(0)}^{(i)},\cdots,\widehat{w_{\beta(j)}^{(i)}},\cdots,w_{\beta(q_i)}^{(i)}\}
\end{eqnarray}
 or
\begin{eqnarray}\label{eq-002}
\{v_{\alpha(0)}^{(i)},\cdots, \widehat{v_{\alpha(j)}^{(i)}},\cdots,v_{\alpha(p_i)}^{(i)}\}\boxtimes \tau_i.
\end{eqnarray}

Since $\partial_n^{\times} g\in R(\mathcal{H}\boxtimes\mathcal{H}')_{n-1}$, it follows that  all factors in (\ref{eq-001}) and (\ref{eq-002}) which are not in $R(\mathcal{H})_{p_i}\boxtimes R(\mathcal{H}')_{q_{i}-1}$ or $R(\mathcal{H})_{p_{i}-1}\boxtimes R(\mathcal{H}')_{q_i}$ must be cancelled out  within $\partial_n^{\times} g$. Hence $g$ can be written as the sum of $x_i\boxtimes y_i$  where $x_i$ and $y_i$ are linear combinations of hyperedges in $\mathcal{H}_{p_i}$ and $\mathcal{H}'_{q_i}$ respectively such that $\partial x_i\in R(\mathcal{H})_{p_{i}-1}$ and  $\partial' y_i\in R(\mathcal{H}')_{{q_i}-1}$.


{\sc Case 2}. $p_i=0$ or $q_i=0$. Without loss of generality, suppose $q_i=0$. Then $p_i=n$. Let
\begin{eqnarray*}
\omega_i& =&\{(v_{\alpha_0}^{(i)},w_{\beta_0}^{(i)}),\cdots,(v_{\alpha_n}^{(i)},w_{\beta_0}^{(i)})\}\\
& =&\{v_{\alpha_0}^{(i)},\cdots,v_{\alpha_n}^{(i)}\}\boxtimes \{w_{\beta_0}^{(i)}\}
\end{eqnarray*}
 be an element in $(\mathcal{H}\boxtimes \mathcal{H}')_n$. Then $\sigma_i=\{v_{\alpha_0}^{(i)},\cdots,v_{\alpha_n}^{(i)}\}\in \mathcal{H}$ and $\tau_i=\{w_{\beta_0}^{(i)}\}\in \mathcal{H}'$.

Since $\partial_n^{\times} g\in R(\mathcal{H}\boxtimes\mathcal{H}')_{n-1}$, it follows that  for any $d_{k_i}(\sigma_i)\notin \mathcal{H}_{n-1}$ ($0\leq k_i\leq n$), there exists  some other $n$-hyperedges $\sigma_j$ in $\mathcal{H}$ such that
\begin{eqnarray*}
\sum\limits_{\{(j,k_j)\mid d_{k_j}(\sigma_j)=d_{k_i}(\sigma_i)\}}(-1)^{k_j}=0
\end{eqnarray*}
in $\partial_n^{\times} g$. Hence  $g$ can be written as
\begin{eqnarray*}
\sum x_i\boxtimes \{w_{\beta_0}^{(i)}\}
\end{eqnarray*}
where $x_i$ is  a linear combination of $n$-hyperedges of $\mathcal{H}$ such that  $\partial x_i\in R(\mathcal{H})_{n-1}$ and $\{w_{\beta_0}^{(i)}\}$ are $0$-hyperedges in $\mathcal{H}'$. 

Combining Case 1 and Case 2, we have that any element  in $\mathrm{Inf}_{n}(R(\mathcal{H}\boxtimes \mathcal{H}')_{\ast})$ can be written as $g=\sum\limits_i^m x_i\boxtimes y_i$  where $x_i$ and $y_i$ are linear combinations of hyperedges of $\mathcal{H}$ and $\mathcal{H}'$ respectively such that $\partial x_i\in R(\mathcal{H})_{p_{i}-1}$ and  $\partial' y_i\in R(\mathcal{H}')_{q_{i}-1}$.

Then \begin{eqnarray*}
x_i\in \mathrm{Inf}_{p_i}(R(\mathcal{H})_{\ast},C_{\ast}(\mathcal{K}_{\mathcal{H}};R))
\end{eqnarray*}
 and
 \begin{eqnarray*}
y_i\in \mathrm{Inf}_{q_i}(R(\mathcal{H}')_{\ast},C_{\ast}(\mathcal{K}_{\mathcal{H}'};R)).
\end{eqnarray*}
Hence
\begin{eqnarray*}
x_i\otimes y_i\in \mathrm{Inf}_{p_i}(R(\mathcal{H})_{\ast},C_{\ast}(\mathcal{K}_{\mathcal{H}};R))\otimes \mathrm{Inf}_{q_i}(R(\mathcal{H}')_{\ast},C_{\ast}(\mathcal{K}_{\mathcal{H}'};R)).
\end{eqnarray*}

By Proposition~\ref{pr-2.1}, $x_i\otimes y_i$ is an element in $\mathrm{Inf}_{n}(R(\mathcal{H})_{\ast}\otimes R(\mathcal{H}')_{\ast})$. Then
\begin{eqnarray*}
(\nu_{n-1}\circ\partial_n^{\times})(x_i\boxtimes y_i)&=&\nu_{n-1}((\partial_{p_i} x)\boxtimes y+(-1)^{p_i} x\boxtimes (\partial_{q_i} y) )\\
&=&(\partial_{p_i} x)\otimes y+(-1)^{p_i} x\otimes (\partial_{q_i} y) \\
&=&((\partial\otimes\partial')_n\circ \nu_n)(x_i\otimes y_i)
\end{eqnarray*}
which implies  $(\ref{eq-62})$ is a chain map. We have obtained the proof of assertion in Step 2.

Finally, by the action of (\ref{eq-61}) and (\ref{eq-62}) in Step 1 and Step 2 respectively, we have (\ref{eq-63}).

The lemma is proved.
\end{proof}
\smallskip
In the following example, we illustrate the commutativity of $\mu$ and boundary operator.
\begin{example}
Let $\mathcal{H}$ and $\mathcal{H}'$ be hypergraphs given in Example~\ref{ex-01}. Then
\begin{eqnarray*}
g_1=(\{v_1,v_2\}+\{v_2,v_3\})\otimes \{w_2,w_3\}
\end{eqnarray*}
is an element in $\mathrm{Inf}_{2}(R(\mathcal{H}\boxtimes \mathcal{H}')_{\ast},C_{\ast}(\mathcal{K}_{\mathcal{H}}\times \mathcal{K}_{\mathcal{H}'};R))$. Hence,
\begin{eqnarray*}
(\partial\otimes\partial')_2(g_1)&=&(\{v_3\}-\{v_1\})\otimes\{w_2,w_3\}+(-1)(\{v_1,v_2\}+\{v_2,v_3\})\otimes (\{w_3\}-\{w_2\})\\
(\mu_1\circ(\partial\otimes\partial')_2)(g_1)&=&(\{v_3\}-\{v_1\})\boxtimes\{w_2,w_3\}+(-1)(\{v_1,v_2\}+\{v_2,v_3\})\boxtimes (\{w_3\}-\{w_2\})\\
&=&\{(v_3,w_2),(v_3,w_3)\}-\{(v_1,w_2),(v_1,w_3)\}-\{(v_1,w_3),(v_2,w_3)\}\\
&&+\{(v_1,w_2),(v_2,w_2)\}-\{(v_2,w_3),(v_3,w_3)\}+\{(v_2,w_2),(v_3,w_2)\}\\
\mu_2(g_1)&=&(\{v_1,v_2\}+\{v_2,v_3\})\boxtimes \{w_2,w_3\}\\
&=&\{(v_1,w_2),(v_2,w_2),(v_2,w_3)\}-\{(v_1,w_2), (v_1,w_3),(v_2,w_3)\}\\
&&+\{(v_2,w_2),(v_3,w_2),(v_3,w_3)\}-\{(v_2,w_2),(v_2,w_3),(v_3,w_3)\}\\
(\partial_2^{\times}\circ\mu_2)(g_1)&=&\{(v_1,w_2),(v_2,w_2)\}+\{(v_2,w_2),(v_2,w_3)\}-\\
&&\{(v_1,w_2),(v_1,w_3)\}-\{(v_1,w_3),(v_2,w_3)\}+\{(v_2,w_2),(v_3,w_2)\}\\
&&+\{(v_3,w_2),(v_3,w_3)\}-\{(v_2,w_2),(v_2,w_3)\}-\{(v_2,w_3),(v_3,w_3)\}\\
&=&\{(v_1,w_2),(v_2,w_2)\}-\{(v_1,w_2),(v_1,w_3)\}-\{(v_1,w_3),(v_2,w_3)\}\\
&&+\{(v_2,w_2),(v_3,w_2)\}+\{(v_3,w_2),(v_3,w_3)\}-\{(v_2,w_3),(v_3,w_3)\}\\
&=&(\mu_1\circ(\partial\otimes\partial')_2)(g_1).
\end{eqnarray*}
\end{example}

\begin{proposition}\label{prop2}
Let $\mathcal{H},\mathcal{H}'$ be hypergraphs. Then there is a quasi-isomorphism
\begin{equation*}
  \mathrm{Inf}_{n}(R(\mathcal{H})_{\ast}\otimes R(\mathcal{H}')_{\ast})\rightarrow
   \mathrm{Inf}_{n}(R(\mathcal{H}\boxtimes \mathcal{H}')_{\ast})
\end{equation*}
\end{proposition}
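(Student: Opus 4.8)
The plan is to promote the two restricted chain maps produced in Lemma~\ref{le-6} to a quasi-isomorphism by showing that the restriction of $\mu$ is in fact a chain isomorphism between the two infimum complexes. Recall from Lemma~\ref{le-6} that both
$$\mu\mid_{\mathrm{Inf}_{n}(R(\mathcal{H})_{\ast}\otimes R(\mathcal{H}')_{\ast})}\quad\text{and}\quad \nu\mid_{\mathrm{Inf}_{n}(R(\mathcal{H}\boxtimes\mathcal{H}')_{\ast})}$$
are chain maps, and that they satisfy $\nu\mid\,\circ\,\mu\mid=\mathrm{id}$ by (\ref{eq-63}). This identity already exhibits $\mu\mid$ as a split monomorphism in each degree, so that $H_n(\mu\mid)$ is injective; the only thing left to establish is surjectivity.

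For surjectivity I would read off the decomposition obtained in Step~2 of the proof of Lemma~\ref{le-6}: every element $g\in\mathrm{Inf}_{n}(R(\mathcal{H}\boxtimes\mathcal{H}')_{\ast})$ can be written as $g=\sum_{i} x_i\boxtimes y_i$, where $x_i$ and $y_i$ are linear combinations of hyperedges of $\mathcal{H}$ and $\mathcal{H}'$ with $\partial x_i\in R(\mathcal{H})_{p_i-1}$ and $\partial' y_i\in R(\mathcal{H}')_{q_i-1}$. Thus $x_i\in\mathrm{Inf}_{p_i}(R(\mathcal{H})_{\ast})$ and $y_i\in\mathrm{Inf}_{q_i}(R(\mathcal{H}')_{\ast})$, so by Proposition~\ref{pr-2.1} the element $\sum_i x_i\otimes y_i$ lies in $\mathrm{Inf}_{n}(R(\mathcal{H})_{\ast}\otimes R(\mathcal{H}')_{\ast})$. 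Since $\mu(x_i\otimes y_i)=x_i\boxtimes y_i$ by the definition of the Eilenberg-Zilber map, we obtain $g=\mu\mid\big(\sum_i x_i\otimes y_i\big)$, which proves that $\mu\mid$ is onto.

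Combining the two halves, $\mu\mid$ is a degreewise bijection commuting with the differentials, i.e.\ a chain isomorphism with inverse $\nu\mid$; in particular it is a quasi-isomorphism, and the proposition follows. The step I expect to require the most care is surjectivity, and specifically the verification that the summands $x_i\otimes y_i$ genuinely land in the source infimum complex rather than merely in $(R(\mathcal{H})_{\ast}\otimes R(\mathcal{H}')_{\ast})_n$; this is exactly what Proposition~\ref{pr-2.1} supplies, through the identification $\mathrm{Inf}_{n}(D\otimes D',C\otimes C')=(\mathrm{Inf}_{\ast}(D,C)\otimes\mathrm{Inf}_{\ast}(D',C'))_n$. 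I would note the temptation to argue instead by restricting the natural homotopy $\mu\nu\simeq\mathrm{id}$ of Proposition~\ref{pr-3.111}, but I would avoid that route, since the homotopy need not preserve the infimum subcomplexes, whereas the direct surjectivity argument above sidesteps the issue entirely.
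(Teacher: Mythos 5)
Your proposal is correct, but it takes a genuinely different route from the paper's, and it in fact proves a stronger statement. The paper treats the two restricted maps asymmetrically: from $\nu|\circ\mu|=\mathrm{id}$ (equation~(\ref{eq-63}) of Lemma~\ref{le-6}) it only extracts surjectivity of $(\nu|)_{\ast}$ on homology, and then it devotes the bulk of the proof to injectivity of $(\nu|)_{\ast}$: given a cycle $x\in\mathrm{Inf}_{n}(R(\mathcal{H}\boxtimes\mathcal{H}')_{\ast})$ with $\nu(x)$ a boundary, it uses the classical Eilenberg--Zilber quasi-isomorphism on the ambient complexes to get $x=\partial^{\times}\tilde{x}$ for some $\tilde{x}\in C_{\ast}(\mathcal{K}_{\mathcal{H}}\times\mathcal{K}_{\mathcal{H}'};R)$, and then runs a delicate combinatorial modification (Cases~1--2, Subcases~2.1.1, 2.1.2, 2.2) to replace $\tilde{x}$ by a chain in $R(\mathcal{H}\boxtimes\mathcal{H}')_{n+1}$ with the same boundary. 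You instead observe that the decomposition already established in Step~2 of the proof of Lemma~\ref{le-6} --- every $g\in\mathrm{Inf}_{n}(R(\mathcal{H}\boxtimes\mathcal{H}')_{\ast})$ equals $\sum_{i}x_{i}\boxtimes y_{i}=\mu\big(\sum_{i}x_{i}\otimes y_{i}\big)$ with $\sum_{i}x_{i}\otimes y_{i}\in\mathrm{Inf}_{n}(R(\mathcal{H})_{\ast}\otimes R(\mathcal{H}')_{\ast})$ by Proposition~\ref{pr-2.1} --- says precisely that $\mu|$ is surjective at the chain level; combined with the splitting it makes $\mu|$ a degreewise isomorphism of chain complexes, hence trivially a quasi-isomorphism. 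This is legitimate: the notation $x_{i}\boxtimes y_{i}$ in that step means the full signed shuffle chain $\mu(x_{i}\otimes y_{i})$ (this is how Step~1 of Lemma~\ref{le-6} defines it), and the grid-by-grid cancellation argument there (corner-cut faces are never staircases and can only cancel against the adjacent staircase of the same grid) is exactly what forces each grid component of $g$ to be a multiple of the signed shuffle sum, so $g$ is indeed hit by $\mu|$. What your route buys: it bypasses the paper's entire boundary-lifting case analysis --- the hardest part of the paper --- and yields the stronger conclusion that the two infimum complexes are isomorphic, not merely quasi-isomorphic. What it costs: your argument leans on the internal content of the proof of Lemma~\ref{le-6} rather than on its statement, since the decomposition of elements of $\mathrm{Inf}_{n}(R(\mathcal{H}\boxtimes\mathcal{H}')_{\ast})$ into shuffle blocks is nowhere isolated as a standalone result; to be self-contained you should extract it as a separate lemma before invoking it. Your closing caution about not restricting the homotopy $\mu\circ\nu\simeq\mathrm{id}$ is well placed, and is presumably why the paper, too, avoids that route.
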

\begin{proof}
By Lemma~\ref{le-6}, we have that
\begin{eqnarray*}
(\nu|_{\mathrm{Inf}_{n}(R(\mathcal{H}\boxtimes \mathcal{H}')_{\ast})_{\ast}})\circ(\mu|_{\mathrm{Inf}_{n}(R(\mathcal{H})_{\ast}\otimes R(\mathcal{H}')_{\ast})})_{\ast}=\mathrm{id}
\end{eqnarray*}
and
\begin{eqnarray}\label{eq-66}
(\nu|_{\mathrm{Inf}_{n}(R(\mathcal{H}\boxtimes \mathcal{H}')_{\ast})})_{\ast}
\end{eqnarray}
is surjective. Therefore, it leaves us to show (\ref{eq-66}) is injective. That is, let
\begin{eqnarray*}
x\in \mathrm{Inf}_{n}(R(\mathcal{H}\boxtimes \mathcal{H}')_{\ast})
\end{eqnarray*}
be a cycle such that
\begin{eqnarray*}
(\nu|_{\mathrm{Inf}_{n}(R(\mathcal{H}\boxtimes \mathcal{H}')_{\ast})})(x)
\end{eqnarray*} is a boundary in $\mathrm{Inf}_{n+1}(R(\mathcal{H})_{\ast}\otimes R(\mathcal{H}')_{\ast})\otimes C_{\ast}(\mathcal{K}_{\mathcal{H}'};R))$.
Then $x$ is a boundary in
\begin{eqnarray*}
\mathrm{Inf}_{n+1}(R(\mathcal{H}\boxtimes \mathcal{H}')_{\ast}).
\end{eqnarray*}

Since $\nu(x)$ is a boundary in $C_{\ast}(\mathcal{K}_{\mathcal{H}};R)\otimes C_{\ast}(\mathcal{K}_{\mathcal{H}'};R)$ and  there is a quasi-isomorphism
\begin{eqnarray*}
\xymatrix{
C_{\ast}(\mathcal{K}_{\mathcal{H}}\times  \mathcal{K}_{\mathcal{H}'};R) \ar@<0.5ex>[r]^{\nu~~~~~}& \ar@<0.5ex>[l]^{\mu~~~~~} C_{\ast}(\mathcal{K}_{\mathcal{H}};R)\otimes C_{\ast}(\mathcal{K}_{\mathcal{H}'};R),
}
\end{eqnarray*}
it follows that $x$ is a boundary in $C_{\ast}(\mathcal{K}_{\mathcal{H}}\times \mathcal{K}_{\mathcal{H}'};R)$. Hence, to show
\begin{eqnarray*}
(\nu|_{\mathrm{Inf}_{n}(R(\mathcal{H}\boxtimes \mathcal{H}')_{\ast})})_{\ast}
\end{eqnarray*}
is injective, it is sufficient to show $x$ is also a boundary in $R(\mathcal{H}\boxtimes \mathcal{H}')_{\ast}$.

Let $x=\sum\limits_i r_i\omega_i$ be an arbitrary element  in
\begin{eqnarray*}
\mathrm{Inf}_{n}(R(\mathcal{H}\boxtimes \mathcal{H}')_{\ast})
\end{eqnarray*}
where
\begin{equation*}
  \omega_i=\{(v_{\alpha(0)}^{(i)},w_{\beta(0)}^{(i)}),(v_{\alpha(1)}^{(i)},w_{\beta(1)}^{(i)}),\cdots,(v_{\alpha(p_{i}+q_i)}^{(i)},w_{\beta(p_{i}+q_i)}^{(i)})\}\in (\mathcal{H}\boxtimes \mathcal{H}')_n
\end{equation*}
for some $\sigma_i\in \mathcal{H}_{p_i}$ and $\tau_i\in\mathcal{H}'_{q_i}$, $p_i+q_i=n$, $r_i\in R$. There are two cases.


{\sc Case~1}. $p_i=0$ or $q_i=0$. Without loss of generality,  we assume that $p_i=n$ and  $q_i=0$. By Case 2 of Step 2 in Lemma~\ref{le-6},
\begin{eqnarray*}
x=\sum\limits_i x_i\boxtimes \{w_{\beta(0)}^{(i)}\}
\end{eqnarray*}
where $x_i$  is a linear combination of hyperedges of $\mathcal{H}_n$ and $\{w_{\beta(0)}^{(i)}\}$ is a $0$-hyperedge in $\mathcal{H}'$.

Since
\begin{eqnarray*}
\nu(x)&=&\sum\limits_i \nu(x_i\boxtimes \{w_{\beta(0)}^{(i)}\})\\
&=&\sum\limits_i x_i\otimes \{w_{\beta(0)}^{(i)}\}
 \end{eqnarray*}
is a boundary in $R(\mathcal{H})_{\ast}\otimes R(\mathcal{H}')_{\ast}$, it follows that $\{w_{\beta(0)}^{(i)}\}$ are the same for all $i$ and $\sum\limits_i x_i$ is a boundary in $R(\mathcal{H})_{n+1}$. Denote $\{w_{\beta(0)}^{(i)}\}$ as $\{w_{\beta(0)}\}$.  Then
\begin{eqnarray*}
\nu(x)=(\sum\limits_i x_i)\boxtimes \{w_{\beta(0)}\}
 \end{eqnarray*}
where $\sum\limits_i x_i$ is a boundary in $R(\mathcal{H})_{n+1}$. Hence $x$ is a boundary in $R(\mathcal{H}\boxtimes \mathcal{H}')_{n+1}$.

{\sc Case~2}. $p_i\geq 1$ and $q_i\geq 1$.  Since $x$ is  a boundary in $C_{\ast}(\mathcal{K}_{\mathcal{H}}\times \mathcal{K}_{\mathcal{H}'};R)$, it follows that there exists an element $\tilde x\in C_{\ast}(\mathcal{K}_{\mathcal{H}}\times \mathcal{K}_{\mathcal{H}'};R)$ such that $\partial \tilde x=x$. Let $\tilde x=\sum\limits_i b_i\tilde{\omega}_i$ where $\tilde{\omega}_i\in \mathcal{K}_{\mathcal{H}}\times \mathcal{K}_{\mathcal{H}'}$ and $b_i\in R$. Consider the following two subcases.

{\sc Subcase~2.1}. $\tilde{\omega}_i\in \mathcal{K}_{\mathcal{H}}\boxtimes \mathcal{K}_{\mathcal{H}'}$. Then
\begin{equation*}
  \tilde{\omega}_i=\{(v_{\alpha(0)}^{(i)},w_{\beta(0)}^{(i)}),(v_{\alpha(1)}^{(i)},w_{\beta(1)}^{(i)}),\cdots,(v_{\alpha(l+m)}^{(i)},w_{\beta(l+m)}^{(i)})\}
\end{equation*}
for some $\tilde{\sigma}_i=\{(v_{0}^{(i)},\cdots v_{l}^{(i)})\}\in \mathcal{K}_{\mathcal{H}},\tilde{\tau}_i=\{w_{0}^{(i)},\cdots,w_{m}^{(i)}\}\in \mathcal{K}_{\mathcal{H}'}$ such that either
\begin{equation*}
  \left\{
    \begin{array}{ll}
       \alpha(k+1)=\alpha(k)+1 \\
       \beta(k+1)=\beta(k)
    \end{array}
  \right.
  \end{equation*}
or
\begin{equation*}
  \left\{
    \begin{array}{ll}
       \alpha(k+1)=\alpha(k)\\
       \beta(k+1)=\beta(k)+1.
    \end{array}
  \right.
\end{equation*}

Since $\partial \tilde x=x \in R(\mathcal{H}\boxtimes \mathcal{H}')_{n}$, by similar discussion in Case 1 of Step 2 in Lemma~\ref{le-6},  all elements in $\tilde{\sigma}_i\boxtimes \tilde{\tau_i}$ are in $\tilde x$. Next, we will show that if there exists some $\tilde{\sigma}_i\notin \mathcal{H}$ or $\tilde{\tau}_i\notin\mathcal{H}'$ in $\tilde x$, $\tilde x$ can be substituted with an element in $R(\mathcal{H}\boxtimes \mathcal{H}')_{n+1}$, whose boundary is the same as $\partial^{\times}(\tilde x)$.

Suppose  only  $\tilde{\sigma}_i\in \mathcal{K}_{\mathcal{H}}\setminus\mathcal{H}$.
\smallskip

Step 1. Notice that $\mathrm{dim}(\mathcal{K}_{\mathcal{H}})=\mathrm{dim}(\mathcal{H})$. Since $\tilde{\sigma}_i\in \mathcal{K}_{\mathcal{H}}\setminus\mathcal{H}$, it follows that $\mathrm{dim}\tilde{\sigma}_i<\mathrm{dim}\mathcal{H}$ and there exists a hyperedge $\bar{\sigma}\in \mathcal{H}$ such that $\tilde{\sigma}_i$ is a face of  $\bar{\sigma}$, denoting as $\tilde{\sigma}_i=d_k(\bar\sigma)$ ($0\leq k\leq {l+1}$). Since $\tilde x\in R{\mathcal{H}\boxtimes \mathcal{H}'}_n$, it follows that $(-1)^{l-k+1}\bar{\sigma}\boxtimes \partial'(\tilde{\tau}_i)$ is a factor in $\tilde x$ such that $\tilde {\sigma}_i\boxtimes \partial'(\tilde{\tau}_i)$ is cancelled out within $\partial\tilde{x}$.

\smallskip
Step 2. We divided this step into Subcase~2.1.1 and Subcase~2.1.2.

{\sc Subcase~2.1.1}.  $\partial^{\times}\Big( \tilde{\sigma}_i\boxtimes \tilde{\tau}_i+(-1)^{l-k+1}\bar{\sigma}\boxtimes \partial'(\tilde{\tau}_i)\Big)\in R(\mathcal{H}\boxtimes\mathcal{H}')_n$.

Then $(\partial\tilde{\sigma}_i)\in \mathcal{H}_{l-1}$ and $d_j(\bar{\sigma})\in \mathcal{H}_l$ for all $j\not=k$. Let
\begin{eqnarray*}
\tilde{x}_1=-\sum\limits_{j\not=k}(-1)^{j-k}d_j(\bar{\sigma})\boxtimes \tilde{\tau}_i.
\end{eqnarray*}
Then $\tilde{x}_1\in R(\mathcal{H}\boxtimes\mathcal{H}')_{n+1}$.

Since
\begin{eqnarray*}
\partial^{\times}\Big(\partial\bar{\sigma}\boxtimes \tilde{\tau}_i+(-1)^{l+1}\bar{\sigma}\boxtimes \partial'(\tilde{\tau}_i)\Big)=0,
\end{eqnarray*}
it follows that
\begin{eqnarray*}
\partial^{\times}\tilde{x}_1=\partial^{\times}\Big( \tilde{\sigma}_i\boxtimes \tilde{\tau}_i+(-1)^{l-k+1}\bar{\sigma}\boxtimes \partial'(\tilde{\tau}_i)\Big).
\end{eqnarray*}

Let
\begin{eqnarray*}
\tilde X=\tilde x-\Big( \tilde{\sigma}_i\boxtimes \tilde{\tau}_i+(-1)^{l-k+1}\bar{\sigma}\boxtimes \partial'(\tilde{\tau}_i)\Big)+\tilde{x}_1.
\end{eqnarray*}
Then $\tilde X$ is an element in $R(\mathcal{H}\boxtimes\mathcal{H}')_{n+1}$ such that $\partial^{\times}(\tilde X)=\partial^{\times}( \tilde x)$. 

{\sc Subcase~2.1.2}. $\partial^{\times}\Big( \tilde{\sigma}_i\boxtimes \tilde{\tau}_i+(-1)^{l-k+1}\bar{\sigma}\boxtimes \partial'(\tilde{\tau}_i)\Big)\notin R(\mathcal{H}\boxtimes\mathcal{H}')_n$.

Since $\partial \tilde x\in R(\mathcal{H}\boxtimes\mathcal{H'})_n$, we assert that there exist some factors of
\begin{eqnarray*}
\tilde x-\Big( \tilde{\sigma}_i\boxtimes \tilde{\tau}_i+(-1)^{l-k+1}\bar{\sigma}\boxtimes \partial'(\tilde{\tau}_i)\Big)
\end{eqnarray*}
which can be added to $\Big( \tilde{\sigma}_i\boxtimes \tilde{\tau}_i+(-1)^{l-k+1}\bar{\sigma}\boxtimes \partial'(\tilde{\tau}_i)\Big)$ such that $d_j(\bar{\sigma})\notin \mathcal{H}_l$ ($j\not=k$) and $d_{k_i}(\tilde{\sigma}_i)\notin \mathcal{H}_{l-1}$ are cancelled out within $\partial \tilde x$.

The reasons are as follows.

For any  $d_{k_i}(\tilde{\sigma}_i)\notin \mathcal{H}_{l-1}$, there exists an unique  $j\not=k$ such that $d_{k_i}(\tilde{\sigma}_i)$ is a face of  $d_j(\bar{\sigma})$ and  $d_{k_i}(\tilde{\sigma}_i)\boxtimes \tilde{\tau}_i$   can be cancelled out in
\begin{eqnarray}\label{eq-ss}
\partial^{\times}\Big(\tilde{\sigma}_i\boxtimes \tilde{\tau}_i+(-1)^{l-k+1}\bar{\sigma}\boxtimes \partial'(\tilde{\tau}_i)+(-1)^{j-k}d_j(\bar{\sigma})\boxtimes \tilde{\tau}_i\Big).
\end{eqnarray}
Meanwhile,  $d_j(\bar{\sigma})\boxtimes \partial'(\tilde{\tau}_i)$ can be also cancelled out in (\ref{eq-ss}).

Hence, if  such $d_j(\bar{\sigma})\boxtimes \tilde{\tau}_i$ are all factors of $\tilde x$, then follows Step 2. Otherwise, follows Step 1. Thus, by repeating the process  with a finite number of times at most, we can have the assertion in Subcase 2.1.2 of  Step 2.

Therefore, in Subcase 2.1, we obtain that there exists an element $\tilde X$ such that $\tilde X\in R(\mathcal{H}\boxtimes\mathcal{H}')_{n+1}$ and $\partial^{\times}(\tilde X)=\partial^{\times}(\tilde x)=x$.

{\sc Subcase~2.2}.  $\tilde{\omega}_i\in \mathcal{K}_{\mathcal{H}}\times \mathcal{K}_{\mathcal{H}'}\setminus{\mathcal{K}_{\mathcal{H}}\boxtimes \mathcal{K}_{\mathcal{H}'}}$. Then for each
\begin{equation*}
  \tilde{\omega}_i=\{(v_{\alpha(0)}^{(i)},w_{\beta(0)}^{(i)}),\cdots,(v_{\alpha(k)}^{(i)},w_{\beta(k)}^{(i)}),(v_{\alpha(k+1)}^{(i)},w_{\beta(k+1)}^{(i)}),
  \cdots,(v_{\alpha(n+1)}^{(i)},w_{\beta(n+1)}^{(i)})\},
\end{equation*}
there exists some  $0\leq k<n$ such that $\alpha(k+1)=\alpha(k)+1$ and $\beta(k+1)=\beta(k)+1$. Since at least one of the terms
\begin{eqnarray*}
&& \{(v_{\alpha(0)}^{(i)},w_{\beta(0)}^{(i)}),\cdots,\widehat{(v_{\alpha(k)}^{(i)},w_{\beta(k)}^{(i)})},(v_{\alpha(k+1)}^{(i)},w_{\beta(k+1)}^{(i)}),
\cdots,(v_{\alpha(n+1)}^{(i)},w_{\beta(n+1)}^{(i)})\},\\
&& \{(v_{\alpha(0)}^{(i)},w_{\beta(0)}^{(i)}),\cdots,(v_{\alpha(k)}^{(i)},w_{\beta(k)}^{(i)}),\widehat{(v_{\alpha(k+1)}^{(i)},w_{\beta(k+1)}^{(i)})},
\cdots,(v_{\alpha(n+1)}^{(i)},w_{\beta(n+1)}^{(i)})\}
\end{eqnarray*}
is not in $\mathcal{K}_{\mathcal{H}}\boxtimes \mathcal{K}_{\mathcal{H}'}$, it follows that $\partial \tilde{\omega}_i\notin R(\mathcal{K}_{\mathcal{H}}\boxtimes \mathcal{K}_{\mathcal{H}'})_{\ast}$.

Notice that
\begin{eqnarray*}
R(\mathcal{H}\boxtimes \mathcal{H}')_{\ast}\subseteq R(\mathcal{K}_{\mathcal{H}}\boxtimes \mathcal{K}_{\mathcal{H}'})_{\ast}.
\end{eqnarray*}
Hence $\partial \tilde{\omega}_i\notin R(\mathcal{H}\boxtimes \mathcal{H}')_{\ast}$. Moreover, $\partial \tilde{\omega}_i$ can not be cancelled out by $\partial \tilde{\omega}_j$ ($j\not=i$) in $\partial \tilde x$. Then there is a factor in $\partial \tilde x$ which is in $R(\mathcal{H}\boxtimes \mathcal{H}')_{\ast}$, which contradicts $\partial \tilde x=x\in R(\mathcal{H}\boxtimes \mathcal{H}')_{\ast}$.

Summarising Subcase~2.1 and Subcase~2.2, we have that if $x=\partial\tilde x$ for some $\tilde x\in C_{\ast}(\mathcal{K}_{\mathcal{H}}\times \mathcal{K}_{\mathcal{H}'};R))$, then $\tilde x$ can be substituted with an element in $R(\mathcal{H}\boxtimes \mathcal{H}')_{n+1}$, whose boundary is the same as $\partial^{\times}(\tilde x)$, which implies $x$ is also a boundary in $R(\mathcal{H}\boxtimes\mathcal{H}'_{n+1})$.


Combining Case~1 and Case~2,  we have  that
\begin{eqnarray*}
(\nu|_{\mathrm{Inf}_{n}(R(\mathcal{H}\boxtimes \mathcal{H}')_{\ast},C_{\ast}(\mathcal{K}_{\mathcal{H}}\times \mathcal{K}_{\mathcal{H}'};R))})_{\ast}
\end{eqnarray*}
is injective. The proposition is proved.

\end{proof}

\begin{remark}
Quasi-isomorphism  is a weaker condition than  chain homotopy and the special chain homotopy (the contraction of chain complex) in
the Eilenberg-Zilber Theorem. 
For details of the quasi-isomorphism theorem, please refer
to \cite[Theorem~8.5]{rational}.
\end{remark}
\smallskip
\begin{theorem}[Algebraic K\"{u}nneth Formula](cf.\cite[Theorem~3B.5]{hatcher})\label{th-5.2}
Let $R$ be a principal ideal domain, and let $C_{\ast},C'_{\ast}$ be chain complexes of free $R$-module. Then there is a natural exact sequence
\begin{equation*}
 0\rightarrow \bigoplus\limits_{p+q=n}H_{p}(C_{\ast})\otimes_R H_{q}(C'_{\ast})\rightarrow H_{n}(C_{\ast}\otimes C'_{\ast})\rightarrow \bigoplus\limits_{p+q=n}\mathrm{Tor}_{R}(H_{p}(C_{\ast}),H_{q-1}(C'_{\ast}))\rightarrow 0.
\end{equation*}
\end{theorem}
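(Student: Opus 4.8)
The plan is to follow the classical homological-algebra argument, exploiting that over a principal ideal domain every submodule of a free module is again free. First I would introduce, for the chain complex $C_\ast$, the cycles $Z_n=\ker\partial_n$ and boundaries $B_n=\mathrm{im}\,\partial_{n+1}$, and record the two short exact sequences of $R$-modules
\begin{equation*}
0\to Z_n\to C_n\xrightarrow{\partial_n} B_{n-1}\to 0,\qquad 0\to B_n\to Z_n\to H_n(C_\ast)\to 0.
\end{equation*}
Since $C_\ast$ is free over a PID, each $B_{n-1}$ is free, so the first sequence splits in every degree. Viewing $Z_\ast$ and $B_\ast$ as chain complexes with zero differential, these degreewise sequences assemble into a short exact sequence of chain complexes $0\to Z_\ast\to C_\ast\to B_{\ast-1}\to 0$.

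Next I would tensor this short exact sequence with $C'_\ast$. As it is split in each degree, exactness is preserved, giving the short exact sequence of chain complexes
\begin{equation*}
0\to Z_\ast\otimes C'_\ast\to C_\ast\otimes C'_\ast\to B_{\ast-1}\otimes C'_\ast\to 0,
\end{equation*}
to which I would apply the associated long exact sequence in homology. Because $Z_\ast$ and $B_\ast$ carry the zero differential, their tensor homologies are computed degreewise, $H_n(Z_\ast\otimes C'_\ast)=\bigoplus_{p+q=n}Z_p\otimes H_q(C'_\ast)$ and likewise with $B$ in place of $Z$. A direct diagram chase shows that the connecting homomorphism $\delta$ is, up to sign, the map induced by the inclusion $B_p\hookrightarrow Z_p$ tensored with the identity on $H_q(C'_\ast)$.

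Finally I would split the long exact sequence into the short exact pieces
\begin{equation*}
0\to\mathrm{coker}\,\delta_{n+1}\to H_n(C_\ast\otimes C'_\ast)\to\ker\delta_n\to 0,
\end{equation*}
and identify the outer terms by tensoring the second sequence above with $H_q(C'_\ast)$: since $B_p$ is free, the long exact sequence for $\mathrm{Tor}$ collapses to $0\to\mathrm{Tor}_R(H_p(C_\ast),H_q(C'_\ast))\to B_p\otimes H_q(C'_\ast)\to Z_p\otimes H_q(C'_\ast)\to H_p(C_\ast)\otimes H_q(C'_\ast)\to 0$. This makes the cokernel of the connecting map $\bigoplus_{p+q=n}H_p(C_\ast)\otimes H_q(C'_\ast)$ and its kernel $\bigoplus_{p+q=n-1}\mathrm{Tor}_R(H_p(C_\ast),H_q(C'_\ast))$; reindexing the latter by $q\mapsto q-1$ turns the constraint $p+q=n-1$ into $p+q=n$ and yields exactly the $H_{q-1}(C'_\ast)$ appearing in the statement. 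Naturality follows because chain maps send cycles to cycles and boundaries to boundaries and commute with all the tensor and connecting maps. The step I expect to be the main obstacle is the careful bookkeeping of the connecting homomorphism: verifying that it really is the inclusion-induced map and tracking the single degree shift so that the $\mathrm{Tor}$ summand lands on $H_{q-1}(C'_\ast)$; the identification of the kernel and cokernel with $\mathrm{Tor}$ and $\otimes$, together with the vanishing of the higher $\mathrm{Tor}$, rests essentially on the PID hypothesis through the freeness of $B_\ast$ and $Z_\ast$.
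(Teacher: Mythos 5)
Your proof is correct and is precisely the classical argument the paper points to: the paper states this theorem without proof, citing \cite[Theorem~3B.5]{hatcher}, and your decomposition via the split exact sequences $0\to Z_\ast\to C_\ast\to B_{\ast-1}\to 0$ and $0\to B_\ast\to Z_\ast\to H_\ast(C_\ast)\to 0$, the identification of the connecting homomorphism with the inclusion-induced map, and the extraction of $\otimes$ and $\mathrm{Tor}$ from its cokernel and kernel is exactly Hatcher's proof. The degree bookkeeping (the shift producing $H_{q-1}(C'_\ast)$) and the use of the PID hypothesis through freeness of $Z_\ast$ and $B_\ast$ are all handled correctly.
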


\begin{theorem}[K\"{u}nneth Formula for Hypergraphs] 
\label{th-1.0}
Let $R$ be a principle ideal domain. Let $\mathcal{H},\mathcal{H}'$ be hypergraphs. Then there is a natural exact sequence
\begin{equation*}
  0\rightarrow \bigoplus\limits_{p+q=n}H_{p}(\mathcal{H})\otimes H_{q}(\mathcal{H}')\rightarrow H_{n}(\mathcal{H}\boxtimes\mathcal{H}')\rightarrow
  \bigoplus\limits_{p+q=n}\mathrm{Tor}_{R}(H_{p}(\mathcal{H}),H_{q-1}(\mathcal{H}'))\rightarrow 0.
\end{equation*}
\end{theorem}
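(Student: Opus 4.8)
The plan is to reduce the statement to the purely algebraic Künneth formula of Theorem~\ref{th-5.2} by passing to the infimum chain complexes, and then to transport the result across the quasi-isomorphism of Proposition~\ref{prop2}. First I would record that the embedded homology is by definition the homology of the infimum chain complex, so $H_p(\mathcal{H})=H_p(\mathrm{Inf}_*(R(\mathcal{H})_*))$ and likewise for $\mathcal{H}'$. Since $R$ is a principal ideal domain and each $\mathrm{Inf}_n(R(\mathcal{H})_*)=R(\mathcal{H}_n)\cap(\partial_n)^{-1}R(\mathcal{H})_{n-1}$ is a submodule of the finitely generated free module $R(\mathcal{H}_n)$, it is itself free; hence $\mathrm{Inf}_*(R(\mathcal{H})_*)$ and $\mathrm{Inf}_*(R(\mathcal{H}')_*)$ are chain complexes of free $R$-modules, exactly the hypothesis required to invoke Theorem~\ref{th-5.2}.

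Next I would apply Theorem~\ref{th-5.2} to the pair $C_*=\mathrm{Inf}_*(R(\mathcal{H})_*)$ and $C'_*=\mathrm{Inf}_*(R(\mathcal{H}')_*)$. This produces a natural short exact sequence whose outer terms are already the desired $\bigoplus_{p+q=n}H_p(\mathcal{H})\otimes H_q(\mathcal{H}')$ and $\bigoplus_{p+q=n}\mathrm{Tor}_R(H_p(\mathcal{H}),H_{q-1}(\mathcal{H}'))$, and whose middle term is $H_n\big(\mathrm{Inf}_*(R(\mathcal{H})_*)\otimes \mathrm{Inf}_*(R(\mathcal{H}')_*)\big)$. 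It then remains only to identify this middle term with $H_n(\mathcal{H}\boxtimes\mathcal{H}')$. For this I would apply Proposition~\ref{pr-2.1} with $C=C_*(\mathcal{K}_{\mathcal{H}};R)$, $C'=C_*(\mathcal{K}_{\mathcal{H}'};R)$, $D=R(\mathcal{H})_*$ and $D'=R(\mathcal{H}')_*$, which identifies the tensor-product complex $\mathrm{Inf}_*(R(\mathcal{H})_*)\otimes \mathrm{Inf}_*(R(\mathcal{H}')_*)$ with $\mathrm{Inf}_*(R(\mathcal{H})_*\otimes R(\mathcal{H}')_*)$ degreewise; since both sit inside $C\otimes C'$ carrying the restricted differential $(\partial\otimes\partial')_*$, this degreewise equality is in fact an equality of chain complexes, so their homologies coincide.

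Finally I would invoke Proposition~\ref{prop2}, which furnishes a quasi-isomorphism $\mathrm{Inf}_*(R(\mathcal{H})_*\otimes R(\mathcal{H}')_*)\to \mathrm{Inf}_*(R(\mathcal{H}\boxtimes\mathcal{H}')_*)$; reading the target through Proposition~\ref{prop1} as the infimum complex computing the embedded homology of $\mathcal{H}\boxtimes\mathcal{H}'$ gives $H_n\big(\mathrm{Inf}_*(R(\mathcal{H})_*\otimes R(\mathcal{H}')_*)\big)\cong H_n(\mathcal{H}\boxtimes\mathcal{H}')$. Substituting this isomorphism for the middle term of the sequence from the previous step yields precisely the claimed exact sequence, and naturality is inherited from the naturality in Theorem~\ref{th-5.2} together with the functoriality of the Eilenberg--Zilber and Alexander--Whitney maps underlying Proposition~\ref{prop2}. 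I do not expect a serious obstacle here, because the genuinely difficult content—that $\nu$ and $\mu$ restrict to mutually inverse quasi-isomorphisms of the infimum complexes—has already been discharged in Lemma~\ref{le-6} and Proposition~\ref{prop2}; for the theorem itself the only points demanding care are verifying freeness of the infimum complexes (so that Theorem~\ref{th-5.2} applies) and confirming that the degreewise identity of Proposition~\ref{pr-2.1} is an identity of chain complexes rather than merely of graded modules.
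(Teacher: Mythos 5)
Your proposal is correct and follows essentially the same route as the paper's own proof: apply the algebraic K\"{u}nneth formula (Theorem~\ref{th-5.2}) to the infimum chain complexes, identify the middle term via Proposition~\ref{pr-2.1}, and then transport it through the quasi-isomorphism of Proposition~\ref{prop2} together with the identification $\mathcal{K}_{\mathcal{H}\boxtimes\mathcal{H}'}=\mathcal{K}_{\mathcal{H}}\times\mathcal{K}_{\mathcal{H}'}$ from Proposition~\ref{prop1}. Your explicit verification that the infimum complexes are free (as submodules of finitely generated free modules over a PID) is a point the paper leaves implicit, and is a welcome addition rather than a deviation.
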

\begin{proof}
By Theorem~\ref{th-5.2}, there is an exact sequence
\begin{equation*}
\begin{split}
    0\rightarrow \bigoplus\limits_{p+q=n}H_{p}(\mathcal{H}) \otimes H_{q}(\mathcal{H}') &\rightarrow H_{n}(\mathrm{Inf}_{\ast}(\mathcal{H})\otimes \mathrm{Inf}_{\ast}(\mathcal{H}')) \\
    &  \rightarrow \bigoplus\limits_{p+q=n}\mathrm{Tor}_{R}(H_{p}(\mathcal{H}),H_{q-1}(\mathcal{H}'))\rightarrow 0.
\end{split}
\end{equation*}
By Proposition~\ref{pr-2.1}, we have that
\begin{equation*}
  \mathrm{Inf}_{\ast}(\mathcal{H})\otimes \mathrm{Inf}_{\ast}(\mathcal{H}')= \mathrm{Inf}_{n}(R(\mathcal{H})_{\ast}\otimes R(\mathcal{H}')_{\ast},C_{\ast}(\mathcal{K}_{\mathcal{H}};R)\otimes C_{\ast}(\mathcal{K}_{\mathcal{H}'};R)).
\end{equation*}
By Proposition \ref{prop2}, we obtain that
\begin{equation*}
  H( \mathrm{Inf}_{n}(R(\mathcal{H})_{\ast}\otimes R(\mathcal{H}')_{\ast},C_{\ast}(\mathcal{K}_{\mathcal{H}};R)\otimes C_{\ast}(\mathcal{K}_{\mathcal{H}'};R)))\cong H(\mathrm{Inf}_{n}(R(\mathcal{H}\boxtimes \mathcal{H}')_{\ast},C_{\ast}(\mathcal{K}_{\mathcal{H}}\times \mathcal{K}_{\mathcal{H}'};R)).
\end{equation*}
Note that by Proposition~\ref{prop1}, $\mathcal{K}_{\mathcal{H}}\times \mathcal{K}_{\mathcal{H}'}=\mathcal{K}_{\mathcal{H}\boxtimes\mathcal{H}'}$, we have
\begin{equation*}
  H(\mathrm{Inf}_{\ast}(\mathcal{H})\otimes \mathrm{Inf}_{\ast}(\mathcal{H}'))\cong H(\mathcal{H}\boxtimes \mathcal{H}').
\end{equation*}
This implies our result.
\end{proof}

\begin{corollary}\label{co-4.2a}
Let $\mathcal{H}$ and $\mathcal{H}'$ be hypergraphs.  Let $\mathbb{F}$ be a field.   Then for each $n\geq 0$,  we have the isomorphism of $R$-modules
 \begin{eqnarray*}
 H_{n}(\mathcal{H}\boxtimes \mathcal{H}';\mathbb{F})\cong  (H_*(\mathcal{H};\mathbb{F})\otimes_\mathbb{F} H_*(\mathcal{H}';\mathbb{F}))_n.
\end{eqnarray*}
 \qed
\end{corollary}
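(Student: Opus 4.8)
The plan is to specialize the K\"{u}nneth formula of Theorem~\ref{th-1.0} to field coefficients and then exploit the vanishing of $\mathrm{Tor}$. Since every field is a principal ideal domain, Theorem~\ref{th-1.0} applies verbatim with $R=\mathbb{F}$, yielding the natural short exact sequence
\[
0\rightarrow \bigoplus_{p+q=n}H_{p}(\mathcal{H};\mathbb{F})\otimes_{\mathbb{F}} H_{q}(\mathcal{H}';\mathbb{F})\rightarrow H_{n}(\mathcal{H}\boxtimes\mathcal{H}';\mathbb{F})\rightarrow \bigoplus_{p+q=n}\mathrm{Tor}_{\mathbb{F}}(H_{p}(\mathcal{H};\mathbb{F}),H_{q-1}(\mathcal{H}';\mathbb{F}))\rightarrow 0.
\]

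First I would invoke the standard fact that every module over a field is free, hence flat, so that $\mathrm{Tor}_{\mathbb{F}}(A,B)=0$ for all $\mathbb{F}$-vector spaces $A$ and $B$. Applying this with $A=H_{p}(\mathcal{H};\mathbb{F})$ and $B=H_{q-1}(\mathcal{H}';\mathbb{F})$ shows that the rightmost term above vanishes, so the sequence collapses and the left-hand inclusion becomes the natural isomorphism
\[
\bigoplus_{p+q=n}H_{p}(\mathcal{H};\mathbb{F})\otimes_{\mathbb{F}} H_{q}(\mathcal{H}';\mathbb{F})\xrightarrow{\ \cong\ } H_{n}(\mathcal{H}\boxtimes\mathcal{H}';\mathbb{F}).
\]

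Finally I would identify the source with the graded tensor product in the degree stated in the corollary: by the convention $(D\otimes D')_n=\bigoplus_{p+q=n}D_p\otimes D'_q$ introduced earlier, the left-hand side above is precisely $(H_*(\mathcal{H};\mathbb{F})\otimes_{\mathbb{F}} H_*(\mathcal{H}';\mathbb{F}))_n$, which gives the claimed isomorphism of $\mathbb{F}$-modules. I expect no genuine obstacle here: the entire content of the corollary is already packaged in Theorem~\ref{th-1.0}, and the only points requiring routine care are to observe that $\mathbb{F}$ qualifies as a PID so the theorem applies, and to match the direct-sum indexing of the short exact sequence with the degree-$n$ component of the graded tensor product.
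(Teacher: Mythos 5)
Your proposal is correct and is exactly the argument the paper intends: the corollary is stated with no written proof (just a \qed), as an immediate consequence of Theorem~\ref{th-1.0}, and you supply precisely the standard details — a field is a PID, $\mathrm{Tor}_{\mathbb{F}}$ vanishes since every $\mathbb{F}$-module is free, so the short exact sequence collapses to the stated isomorphism, with the graded identification $(H_*\otimes H_*)_n=\bigoplus_{p+q=n}H_p\otimes H_q$. No gaps; note only that the paper's phrase ``isomorphism of $R$-modules'' should read $\mathbb{F}$-modules, as you implicitly corrected.
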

The next example illustrates the above result.

\begin{example}\label{ex-6.1}
Let $\mathcal{H}=\{\{v_0\}, \{v_0,v_1\}, \{v_1,v_2\}, \{v_0,v_1,v_2\}\}$, $\mathcal{H'}=\{\{w_0\}, \{w_1\}, \{w_0,w_1\}\}$, and $\mathbb{F}$ is  a field. Then we  have that
\begin{equation*}
{\mathcal{H}\boxtimes \mathcal{H}'}= \left\{\begin{array}{cc}
\{(v_0,w_0)\}, & \{(v_0,w_1)\},\\
\{(v_0,w_0),(v_0,w_1)\}, & \{(v_0,w_0), (v_1, w_0)\},\\
\{(v_1,w_0),(v_2,w_0)\},& \{(v_0,w_1),(v_1,w_1)\},\\
\{(v_1,w_1),(v_2,w_2)\},& \{(v_0,w_0),(v_0,w_1),(v_1,w_1)\},\\
\{(v_0,w_0),(v_1,w_0),(v_1,w_1)\},&\{(v_1,w_0),(v_1,w_1),(v_2,w_1)\},\\
\{(v_1,w_0),(v_2,w_0),(v_2,w_1)\},&\{(v_0,w_0),(v_1,w_0),(v_2,w_0)\},\\
\{(v_0,w_1),(v_1,w_1),(v_2,w_1)\},&\{(v_0,w_0),(v_1,w_0),(v_2,w_0),(v_2,w_1)\},\\
\{(v_0,w_0),(v_1,w_0),(v_1,w_1),(v_2,w_1)\},&\{(v_0,w_0),(v_0,w_1),(v_1,w_1),(v_2,w_1)\}\\
\end{array}\right\}
\end{equation*}
According to definition of the embedded homology and Proposition~2.3 in \cite{h1}, we know that
\begin{eqnarray*}
 H_n(\mathcal{H};\mathbb{F})&=&H_n(\text{Inf}_{*}(\mathcal{H}))\\
 &=&\text{Ker}(\partial_n\mid_{C_n(\mathcal{H};\mathbb{F})})/(C_n(\mathcal{H};\mathbb{F}) \cap {\partial_{n+1}(C_{n+1}(\mathcal{H};\mathbb{F}))})\\
 H_0(\mathcal{H}\boxtimes\mathcal{H'};\mathbb{F})&=&\text{Ker}(\partial_0\mid_{\langle \{(v_0,w_0)\}, \{(v_0,w_1)\}\rangle})/\langle\{(v_0,w_1)\}-\{(v_0,w_0)\}\rangle\\
&=&\mathbb{F}\\
H_0(\mathcal{H};\mathbb{F})&=&\text{Ker}(\partial_0\mid_{\langle \{v_0\}\rangle})/\langle \{v_1\}-\{v_0\}, \{v_2\}-\{v_1\} \rangle\cap{\langle \{v_0\} \rangle}\\
&=&\mathbb{F}\\
H_0(\mathcal{H'};\mathbb{F})&=&\text{Ker}(\partial_0\mid_{\langle \{w_0\}, \{w_1\}\rangle})/\langle \{w_1\}-\{v_0\} \rangle\\
&=&\mathbb{F}\\
H_p(\mathcal{H}\boxtimes\mathcal{H'};\mathbb{F})&=&0, H_p(\mathcal{H};\mathbb{F})=H_p(\mathcal{H'};\mathbb{F})=0 ~\text{for~any}~ p \geq 1.
\end{eqnarray*}
Thus
\begin{eqnarray*}
 H_{n}(\mathcal{H}\boxtimes \mathcal{H}';\mathbb{F})\cong  (H_*(\mathcal{H};\mathbb{F})\otimes_\mathbb{F} H_*(\mathcal{H}';\mathbb{F}))_n.
\end{eqnarray*}
\end{example}

\subsection{Further Discussions}\label{sec-6}

In this section,  we  discuss briefly that our proof for the K\"{u}nneth formula for hypergraphs is applicable to give an alternative proof for the K\"{u}nneth formula for the path homology of digraphs.

\smallskip

A digraph is a pair $(V,E)$ where $V$ is the vertex set and $E$ is a subset of $V\times V$.  For any $(a,b)\in E$, we write $a\to b$ and call it a directed edge.
In \cite{yau4,yau1,yau2,3},   the professors  A. Grigor'yan,  Y. Lin,  Y. Muranov, V. Vershinin and S.T. Yau  defined the path complex  for a digraph where the
allowed paths go along the arrows of the directed edges; and  with the help of path complex, the path homology for a digraph is defined and studied.
The K\"{u}nneth formula for the path homology (with field coefficients) of digraphs is proved in \cite[Section~7]{yau1} and \cite{3}.
We can generalize it and obtain a K\"{u}nneth formula for the path homology with coefficients in a general principal ideal domain $R$.

We regard the set of allowed paths in a digraph as a graded subset of certain simplicial set,
and regard the path complex as a graded abelian subgroup of certain chain complex.  By Theorem~\ref{th-2.1}, the Eilenberg-Zilber Theorem
of simplicial sets, and a similar argument of the K\"{u}nneth formula for hypergraphs (Proof of Theorem~\ref{th-1.0}),
we would obtain an alternative proof for the K\"{u}nneth formula for the path homology,
and generalize the coefficients  from a field to general principal ideal domains.




\medskip
\noindent {\bf Acknowledgement}. The authors would like to thank Prof. Yong Lin and Prof. Jie Wu  for their supports,  discussions and encouragements.
The authors also would like to express their deep gratitude to the reviewer(s) for their careful reading, valuable comments, and helpful suggestions.

The first author is supported by the Youth Fund of Hebei Provincial Department of Education (QN2019333),
the Natural Fund of Cangzhou Science and Technology Bureau (No.197000002) and a Project of Cangzhou Normal University (No.xnjjl1902).
The second author is supported by the Postdoctoral International Exchange Program of
China 2019 project from The Office of China Postdoctoral Council, China Postdoctoral Science Foundation.

\bigskip

Chong Wang  (for correspondence)

 Address: $^1$School of Mathematics, Renmin University of China, 100872, China.

 $^2$School of Mathematics and Statistics, Cangzhou Normal University, 061000 China .

 e-mail:  wangchong\_618@163.com

  \medskip

 Shiquan Ren  %

 Address: Yau  Mathematical Sciences Center, Tsinghua University,  100084 China.

 e-mail: srenmath@126.com

\medskip

Jian Liu

Address: School of Mathematical Sciences and LPMC, Nankai University, 300071 China.

e-mail: liujian2@mail.nankai.edu.cn

 \end{document}